\DeclareMathOperator{\real}{\mathrm{Re}}
\numberwithin{equation}{section}
\newtheorem{theorem}{Theorem}[section]
\theoremstyle{remark}
\newtheorem{remark}[theorem]{Remark}
\newtheorem{example}[theorem]{Example}
\newtheorem{definition}[theorem]{Definition}
\begin{document}

\title[Starlikeness Associated With The Exponential Function]{ Starlikeness Associated With The Exponential Function \boldmath}
\author[Adiba Naz]{Adiba Naz}

\address{Department of Mathematics,
	 University of Delhi,
	Delhi--110 007, India}
\email{adibanaz81@gmail.com}

\author[Sumit Nagpal]{Sumit Nagpal}

\address{Department of Mathematics, Ramanujan College, University of Delhi,	Delhi--110 019, India}
\email{sumitnagpal.du@gmail.com}

\author[V. Ravichandran]{V. Ravichandran}

\address{Department of Mathematics, National Institute of Technology,  Tiruchirappalli--620 015,  India}
\email{vravi68@gmail.com}

\begin{abstract}
Given a domain $\Omega$ in the complex plane $\mathbb{C}$ and a univalent function $q$ defined in an open unit disk $\mathbb{D}$ with nice boundary behaviour, Miller and Mocanu studied the class of admissible functions $\Psi(\Omega,q)$ so that the differential subordination $\psi(p(z),zp(z),z^2p''(z);z)\prec h(z)$ implies $p(z)\prec q(z)$  where $p$ is an analytic function in $\mathbb{D}$ with $p(0)=1$, $\psi:\mathbb{C}^3\times \mathbb{D}\to\mathbb{C}$ and $\Omega=h(\mathbb{D})$.  This paper investigates the properties of this class for $q(z)=e^z$. As application, several sufficient conditions for normalized analytic functions $f$  to be in the subclass of starlike functions associated with the exponential function are obtained.
\end{abstract}

\keywords{univalent functions, starlike functions, differential subordination, exponential function,  janowski starlike function}
\subjclass[2010]{30C45, 30C80}

\maketitle

\section{Introduction and Preliminaries }
Let $\mathcal{H}[a,n]$ denote the class of analytic functions defined in the open unit disk $\mathbb{D}:=\{z \in \mathbb{C}\colon|z|<1\}$ of the form   $f(z)= a + a_nz^n + a_{n+1}z^{n+1}+\cdots$, where $n$ is a positive integer and $a\in \mathbb{C}$. Set $\mathcal{H}_1:=\mathcal{H}[1,1]$. Let $\mathcal{H}$ be the subclass of $\mathcal{H}[0,1]$ consisting of functions $f$ normalized by the condition $f(0)=f'(0)-1=0$. Let $\mathcal{S}$ be a subclass of $\mathcal{H}$ containing univalent functions.  Given any two analytic functions in $\mathbb{D}$, we say that $f$ is subordinate to $g$, written as $f \prec g$, if there exists a Schwarz function $w$ that is analytic in $\mathbb{D}$ with $w(0)=0$ and $|w(z)|<1$
 satisfying $f(z)=g(w(z))$ for all $z\in\mathbb{D}$.
  In particular, if $g$ is univalent, then $f\prec g$ if and only if $f(0)=g(0)$ and $f(\mathbb{D})\subset g(\mathbb{D})$. Some special classes of univalent functions are of great significance in geometric function theory due to their geometric properties.
  By considering the analytic function $\varphi \in\mathcal{H}_1$ with positive real part in $\mathbb{D}$ that maps $\mathbb{D}$ onto regions which are starlike with respect to a point $\varphi(0)=1$ and symmetric with respect to the real axis, in 1994, Ma and Minda \cite{MR1343506}  gave a unified treatment of  various  subclasses of starlike functions in terms of subordination by studying the class
   \[\mathcal{S}^*(\varphi)= \left\{ f\in\mathcal{H}\colon \frac{zf'(z)}{f(z)} \prec \varphi(z),\,  z\in \mathbb{D} \right\}.   \]  For special choices of $\varphi$, the class $\mathcal{S}^*(\varphi)$ reduces to widely-known subclasses of starlike functions. For example, when $-1\leq B<A\leq 1$, $\mathcal{S}^*[A,B]:= \mathcal{S}^*((1+Az)/(1+Bz))$ is the class of Janowski \cite{MR0267103} starlike functions, $\mathcal{S}^*_P:=\mathcal{S}^*(1+2 (\log((1+\sqrt{z})/(1-\sqrt{z})))^2/\pi^2)$ is the class consisting of parabolic starlike functions \cite{MR1128729}, $\mathcal{S}^*_L:=\mathcal{S}^*(\sqrt{1+z})$ is the class of  lemniscate starlike functions \cite{sokol1996radius} and $\mathcal{S}^*_q:=\mathcal{S}^*(z+\sqrt{1+z^2})$ is the class of starlike functions associated with lune \cite{MR3469339}. In 2015, Mendiratta \textit{et al.\@} \cite{MR3394060} also introduced the class $\mathcal{S}^*_e=\mathcal{S}^*(e^z)$ of starlike functions associated with the  exponential function satisfying the condition $|\log (zf'(z)/f(z))|<1$ for $z\in\mathbb{D}$.



The study of differential subordination which is a generalized  form of differential inequalites began with a prodigious article ``Differential subordination and univalent functions'' by S.\@ Miller and P.\@ Mocanu \cite{MR616267} in 1981. After that the theory of differential subordination brought a revolutionary change and attracted many researchers to use this technique for the study of univalent functions. Given a complex function $\psi(r,s,t;z)\colon\mathbb{C}^3\times \mathbb{D}\to\mathbb{C}$ and a univalent function $h$ in $\mathbb{D}$, if $p$ is an analytic function in $\mathbb{D}$ that satisfies the \emph{second-order differential subordination} \begin{equation} \label{1.1}
	\psi(p(z), zp'(z), z^2p''(z);z) \prec h(z)
	\end{equation} then $p$ is called a \emph{solution} of the differential subordination. The univalent function $q$ is said to be a \emph{dominant} of the solutions of the differential subordination if $p\prec q$ for all $p$ satisfying \eqref{1.1}. A dominant $\tilde{q}$ that satisfies $\tilde{q} \prec q$ for all dominants $q$ of \eqref{1.1} is said to be the \emph{best dominant} of  \eqref{1.1}. The best dominant is unique upto a rotation of $\mathbb{D}$. Moreover let $\mathcal{Q}$ denote the set of analytic and univalent  functions $q$  in $\overline{\mathbb{D}} \setminus E(q)$, where \[ E(q) = \{\zeta \in \partial \mathbb{D} \colon \lim_{z\to \zeta} q(z)= \infty \}\] and are such that $q'(\zeta) \not=0$ for $\zeta \in \partial \mathbb{D} \setminus E(q)$. The following definition of admissible functions and the fundamental theorem laid the foundation stone in the theory of differential subordination.

\begin{definition}\cite[p.~27]{MR1760285}
  	Let $\Omega$ be a domain in $\mathbb{C}$, $q\in \mathcal{Q}$ and $n$ be a positive integer. Define $\Psi_n(\Omega,q)$ to be the \emph{class of admissible functions} $\psi\colon \mathbb{C}^3\times \mathbb{D} \to\mathbb{C}$ that satisfies the admissibility condition: \[\psi(r,s,t;z) \notin \Omega\] whenever \[r=q(\zeta) \text{ is finite, } s= m\zeta q'(\zeta) \text{ and }  \real \left(1+ \frac{t}{s}\right) \geq m \real \left( 1+\frac{\zeta q''(\zeta)}{q'(\zeta)} \right)\] where $z\in\mathbb{D}$, $\zeta \in \partial \mathbb{D}\setminus E(q)$ and $m \geq n$ is a positive integer. We write $\Psi_1(\Omega,q)$ as $\Psi(\Omega,q)$.
  \end{definition}
  \begin{theorem} \cite[p.~28]{MR1760285} \label{main}
  	Let $\psi \in \Psi_n(\Omega,q)$ with $q(0) =a$. If $p \in\mathcal{H}[a,n]$ satisfies \[\psi(p(z),zp'(z),z^2p''(z);z)\in\Omega \]
  then $p(z)\prec q(z)$.
  \end{theorem}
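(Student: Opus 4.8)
Since the conclusion $p \prec q$ is immediate when $p$ is the constant $a = q(0)$ (take the Schwarz function $w \equiv 0$), I would first dispose of that case and assume $p$ non-constant. The plan is then to argue by contradiction: supposing $p \not\prec q$, I will manufacture a single point of $\mathbb{D}$ at which the admissibility condition defining $\Psi_n(\Omega,q)$ is met, forcing $\psi(p(z),zp'(z),z^2 p''(z);z) \notin \Omega$ there and contradicting the hypothesis that this value lies in $\Omega$ throughout $\mathbb{D}$. The algebraic bridge to the admissibility condition is the identity that, writing $s = zp'(z)$ and $t = z^2 p''(z)$, one has $1 + t/s = 1 + zp''(z)/p'(z)$, so the admissibility slot $\real(1+t/s)$ is exactly $\real(1 + zp''/p')$.

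The key construction is the composite $w = q^{-1} \circ p$. Because $p(0) = a = q(0)$ lies in the interior of the univalent image $q(\mathbb{D})$, the branch $q^{-1}$ is analytic near $a$, so $w$ is analytic in a neighbourhood of the origin with $w(0) = 0$; since $p - a$ vanishes to order at least $n$ and $q^{-1}$ is conformal at $a$, in fact $w \in \mathcal{H}[0,n]$. The relation $p \prec q$ is equivalent to $w$ extending to a genuine Schwarz function on all of $\mathbb{D}$. Assuming this fails, a continuity and maximum-modulus argument yields a smallest radius $r_0 \in (0,1)$ and a point $z_0$ with $|z_0| = r_0$ such that $|w(z)| < 1$ for $|z| < r_0$ while $|w(z_0)| = 1$. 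Setting $\zeta_0 = w(z_0) \in \partial\mathbb{D}$, the equation $p(z_0) = q(\zeta_0)$ shows $q(\zeta_0)$ is finite, whence $\zeta_0 \in \partial\mathbb{D} \setminus E(q)$ and $q'(\zeta_0) \neq 0$ by the very definition of $\mathcal{Q}$.

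The analytic heart of the argument is the generalized Jack (Clunie--Miller--Mocanu) lemma applied to $w$ at its first boundary-modulus point $z_0$: there is a real number $m \ge n$ with $z_0 w'(z_0) = m\,\zeta_0$ and $\real\!\left(1 + z_0 w''(z_0)/w'(z_0)\right) \ge m$. Differentiating $p = q \circ w$ once and twice at $z_0$ then gives $p(z_0) = q(\zeta_0)$, $z_0 p'(z_0) = m\,\zeta_0 q'(\zeta_0)$, and
\[
1 + \frac{z_0 p''(z_0)}{p'(z_0)} = 1 + m\,\frac{\zeta_0 q''(\zeta_0)}{q'(\zeta_0)} + \frac{z_0 w''(z_0)}{w'(z_0)}.
\]
Taking real parts and cancelling the common term $m\,\real(\zeta_0 q''(\zeta_0)/q'(\zeta_0))$ reduces the desired inequality $\real(1 + z_0 p''/p') \ge m\,\real(1 + \zeta_0 q''/q')$ to precisely Jack's inequality $\real(1 + z_0 w''/w') \ge m$. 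Thus the triple $(r,s,t) = (p(z_0),\, z_0 p'(z_0),\, z_0^2 p''(z_0))$ with $r = q(\zeta_0)$ and $s = m\,\zeta_0 q'(\zeta_0)$ satisfies the admissibility hypothesis, so $\psi \in \Psi_n(\Omega,q)$ forces $\psi(p(z_0), z_0 p'(z_0), z_0^2 p''(z_0); z_0) \notin \Omega$, the desired contradiction. Hence $p \prec q$.

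The step I expect to be the main obstacle is the rigorous production of the boundary contact point $z_0$ together with the invocation of Jack's lemma there --- that is, the underlying Miller--Mocanu lemma guaranteeing the existence of $z_0$, $\zeta_0$ and $m \ge n$ with the three stated properties. Establishing that the modulus of $w$ first reaches $1$ on some circle $|z| = r_0$, and verifying $\zeta_0 \notin E(q)$ so that all the quantities $q(\zeta_0)$, $q'(\zeta_0)$, $q''(\zeta_0)$ remain finite and meaningful, is the delicate analytic core; once that lemma is in hand, the remainder is the routine differentiation of $p = q \circ w$ and the cancellation displayed above.
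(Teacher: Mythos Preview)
The paper does not give its own proof of this theorem; it is quoted directly from Miller and Mocanu's monograph and used as a black box throughout. Your outline is essentially the classical argument found in that reference: the step you flag as the main obstacle---the existence of $z_0\in\mathbb{D}$, $\zeta_0\in\partial\mathbb{D}\setminus E(q)$, and a real $m\ge n$ with $p(z_0)=q(\zeta_0)$, $z_0 p'(z_0)=m\zeta_0 q'(\zeta_0)$ and $\real(1+z_0 p''(z_0)/p'(z_0))\ge m\,\real(1+\zeta_0 q''(\zeta_0)/q'(\zeta_0))$---is precisely Miller and Mocanu's Lemma~2.2d, and once that lemma is granted the contradiction follows exactly as you describe via the chain-rule identities for $p=q\circ w$.
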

Miller and Mocanu \cite{MR1760285} in their monograph discussed the class of admissible functions $\Psi(\Omega,q)$ when the function $q$ maps $\mathbb{D}$ onto a disk or a half-plane. These two special classes together with Theorem \ref{main} lead to several important and interesting results in the theory of differential subordinations. However the aim of this paper is to consider differential implications with the superordinate function $q(z)=e^z$. In Section 2,  the admissibility class $\Psi(\Omega,e^z)$ is obtained, by deriving its admissibility condition. Examples are provided to illustrate the obtained results.

In 2015, Mendiratta \textit{et  al.\@} \cite{MR3394060} estimated bounds on $\beta$ for which $p(z)\prec e^z$ whenever $1 + \beta zp'(z)/p(z)$  is subordinate to $e^z$, $(1+Az)/(1+Bz)$ and $\sqrt{1+z}$. In 2018, Kumar and Ravichandran \cite{MR3800966} extended the result of Mendiratta \textit{et  al.\@} and obtained  bounds on $\beta$ for $1 + \beta zp'(z)/p^j(z)$ $(j=0,2)$.  They also estimated the bounds on $\beta$ such that $p(z)\prec e^z$ whenever $1 + \beta zp'(z)/p^j(z)$ $(j=0,1,2)$ is subordinate to $1+\sin z$ and $1+(z(k+z))/(k(k-z))$, where $k=\sqrt{2}+1$. Also, Gandhi \textit{et  al.\@}\cite{gandhi2018first} obtained bounds on $\beta$ for which $p(z)\prec e^z$ whenever $1 + \beta zp'(z)/p^j(z)$ $(j=0,1,2)$ is subordinate to $z+\sqrt{1+z^2}$. Motivated by their works and that of \cite{MR975653,MR2336133,MR3116529,MR3518217,MR2917253,MR1950727,MR2335444,MR2,MR704183}, in Section 3, the problem
\[\psi(p(z),zp(z),z^2p''(z);z)\prec h(z)\quad \Rightarrow \quad p(z)\prec e^z\]
is established for special cases of Janowski starlike functions $h$. In Section 4, the above said problem is solved for various expressions  when $h$ in particular, is also an exponential function. 
The results of \cite{MR3394060} are not only generalized but new differential implications are also obtained in last two sections.  Additionally, the applications of the results obtained yield sufficient conditions for functions $f\in\mathcal{H}$ to belong to the class $\mathcal{S}^*_e$.

\bigskip 	


\section{The Admissibility Condition }
In this section, we describe the admissible class $\Psi(\Omega,q)$ with examples, where $\Omega$ is a domain in $\mathbb{C}$ and $q(z)=e^z$. Note that $q$ is a univalent function in $\overline{\mathbb{D}}$ with $q(\mathbb{D}) = \Delta$ and $q(0)=1$, where $\Delta:=\{w \in \mathbb{C}\colon |\log w|<1\}$. Thus $q\in \mathcal{Q}$ with $E(q)=\emptyset$ and hence the class $\Psi(\Omega,q)$ is well-defined.

For $|\zeta| =1$, $q(\zeta) \in q(\partial \mathbb{D}) = \partial q(\mathbb{D}) = \{w \in \mathbb{C}:|\log w| =1\}$. This gives $|\log q(\zeta) | =1$ so that $\log q(\zeta) =e^{i\theta}$, where $\theta \in [0,2\pi)$ and hence $q(\zeta) = e^{e^{i\theta}}$. But  $q(\zeta) = e^\zeta$ which implies that $\zeta = e^{i\theta}$. Also $\zeta q'(\zeta) = e^{i\theta} e^{e^{i\theta}}$ and \begin{equation*}
 \real\left(1+\frac{\zeta q''(\zeta)}{q'(\zeta)}\right) = \real (1+e^{i\theta}) = 1+\cos \theta.
 \end{equation*}
 Thus the admissibility condition reduces to   \begin{equation} \label{adm} \begin{cases}  \begin{split}  \psi(r,s,t;z) \notin \Omega \quad  \text{whenever} \quad r = q(\zeta) = e^{e^{i\theta}} \\
 s = m\zeta q'(\zeta)  = m e^{i\theta} r \\
 \text{and} \quad \real (1+t/s) \geq m (1+\cos \theta)  \end{split} \end{cases}
 \end{equation}
where $z\in\mathbb{D}$, $\theta \in [0,2\pi)$ and $m \geq 1$. Therefore the class $\Psi(\Omega,e^z)$ consists of those functions $\psi\colon \mathbb{C}^3\times \mathbb{D}\to\mathbb{C}$ that satisfy the admissibility condition given by \eqref{adm}.  If $\psi\colon\mathbb{C}^2\times \mathbb{D}\to\mathbb{C}$, then the admissibility condition \eqref{adm} reduces to
 \[ \psi(e^{e^{i\theta}},m e^{i\theta} e^{e^{i\theta}};z)\not\in\Omega \] where $z\in\mathbb{D}$, $\theta \in [0,2\pi)$ and $m \geq 1$. As a particular case of Theorem \ref{main}, we have the following

 \begin{theorem} \label{thm A} 
 	Let $p \in\mathcal{H}_1$.
\begin{itemize}
 \item [(i)] If $\psi \in \Psi(\Omega,e^z)$, then
  \[\psi(p(z),zp'(z),z^2p''(z);z)\in\Omega\quad \Rightarrow\quad p(z)\prec e^z.\]
 \item [(ii)] If $\psi \in \Psi(\Delta,e^z)$, then
  \[\psi(p(z),zp'(z),z^2p''(z);z)\prec e^z\quad \Rightarrow\quad p(z)\prec e^z.\]
  \end{itemize}
\end{theorem}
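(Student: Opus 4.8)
The plan is to recognize Theorem \ref{thm A} as a direct specialization of the fundamental result Theorem \ref{main} to the choice $q(z)=e^z$, exploiting the facts already established at the opening of this section. First I would record that $q(z)=e^z$ is univalent on $\overline{\mathbb{D}}$ with $q(0)=1$, $q(\mathbb{D})=\Delta$, and $q\in\mathcal{Q}$ with $E(q)=\emptyset$, so that the admissible class $\Psi(\Omega,e^z)$ is well defined and, taking $n=1$, coincides with $\Psi_1(\Omega,q)$ in the notation of the Definition. This is precisely the bookkeeping carried out in the paragraphs preceding the statement.

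For part (i), the hypotheses match those of Theorem \ref{main} verbatim once we set $a=q(0)=1$ and $n=1$: the function $p$ lies in $\mathcal{H}_1=\mathcal{H}[1,1]=\mathcal{H}[a,n]$, and $\psi\in\Psi(\Omega,e^z)=\Psi_1(\Omega,q)$. Hence a single application of Theorem \ref{main} yields that $\psi(p(z),zp'(z),z^2p''(z);z)\in\Omega$ forces $p(z)\prec e^z$, and no further work is needed.

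For part (ii), the only additional observation is that subordination to the univalent function $e^z$ amounts to containment of the range in $\Delta$. Indeed, if $\psi(p(z),zp'(z),z^2p''(z);z)\prec e^z$, then by the definition of subordination there is a Schwarz function $w$ with $\psi(\,\cdot\,;z)=e^{w(z)}$, and since $|w(z)|<1$ we get $\psi(\,\cdot\,;z)\in e^{\mathbb{D}}=\Delta$ for every $z\in\mathbb{D}$. Thus the hypothesis of part (i) is met with $\Omega=\Delta$, and part (i) delivers $p(z)\prec e^z$.

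The argument is entirely routine, so there is essentially no obstacle beyond verifying the parameter matching: one must confirm that $(a,n)=(1,1)$ aligns the setting of $\mathcal{H}_1$ with $\mathcal{H}[a,n]$ in Theorem \ref{main}, and that $e^z$ genuinely belongs to $\mathcal{Q}$, both of which are furnished by the preliminary computations. I would also note that the reduction in part (ii) uses only that $e^z$ maps $\mathbb{D}$ onto $\Delta$ and does not require the converse half of the subordination equivalence; univalence of $e^z$ on $\overline{\mathbb{D}}$ enters solely to guarantee $q\in\mathcal{Q}$ for part (i).
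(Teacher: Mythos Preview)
Your proposal is correct and matches the paper's approach: the paper simply states the theorem as ``a particular case of Theorem \ref{main}'' without writing out any further proof, which is exactly the specialization you carry out with $q(z)=e^z$, $a=1$, $n=1$. Your additional justification for part (ii)---reducing the subordination hypothesis to range containment in $\Delta$---is the natural way to make the implicit step explicit.
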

\noindent We close this section with some examples illustrating Theorem \ref{thm A}.
\begin{example}
Let $\psi(r,s,t;z) = r+(1+2e)s$ and $h:\mathbb{D}\to\mathbb{C}$ be defined by
\[h(z)=2\left(\frac{2z+1}{2+z}\right).\]
Then $\Omega=h(\mathbb{D})=\{w\in \mathbb{C}:|w|<2\}$. To prove  $\psi \in \Psi(\Omega,e^z)$, we need to show that the admissibility condition \eqref{adm} is satisfied. Consider
\begin{align*}
|\psi(r,s,t;z)|&=e^{\cos\theta}|1+(1+2e)me^{i\theta}|\\
               &\geq e^{-1} ((1+2e)m-1)\geq 2
\end{align*}
whenever $r = e^{e^{i\theta}}$, $s = m e^{i\theta}r$ and $\real  \left(1+t/s\right)$ $\geq$ $m (1+\cos \theta )$, where $z\in \mathbb{D}$, $\theta \in [0,2\pi)$ and $m \geq 1$. Therefore $\psi(r,s,t;z)$ $\notin$ $\Omega$ and hence $\psi \in \Psi(\Omega,e^z)$. By Theorem \ref{thm A}, it follows that if $p\in\mathcal{H}_1$, then
\[|p(z)+(1+2e)zp'(z)|<2\quad \Rightarrow\quad p(z)\prec e^z.\]
\end{example}

\begin{example}
If $\psi(r,s,t;z) = 1+(1+\sqrt{2})es$ and $h(z)=\sqrt{1+z}$, then $\Omega=h(\mathbb{D})=\{w\in \mathbb{C}:|w^2-1|<1\}$. Consider
\begin{align*}
|(\psi(r,s,t;z))^2-1|&=(1+\sqrt{2})e|s||(1+\sqrt{2})es+2|\\
               &\geq (1+\sqrt{2})e|s|((1+\sqrt{2})e|s|-2)\\
               &=(1+\sqrt{2})me^{1+\cos \theta}((1+\sqrt{2})me^{1+\cos \theta}-2)\\
               &\geq (1+\sqrt{2})(\sqrt{2}-1)=1
\end{align*}
whenever $r = e^{e^{i\theta}}$, $s = m e^{i\theta}r$ and $\real  \left(1+t/s\right)$ $\geq$ $m (1+\cos \theta )$, where $z\in \mathbb{D}$, $\theta \in [0,2\pi)$ and $m \geq 1$. Thus $\psi(r,s,t;z)$ $\notin$ $\Omega$ and therefore $\psi \in \Psi(\Omega,e^z)$. By Theorem \ref{thm A}, it is easily seen that if $p\in\mathcal{H}_1$, then
\[|(1+(1+\sqrt{2})e zp'(z))^2-1|<1\quad \Rightarrow\quad p(z)\prec e^z.\]
\end{example}

  \begin{example}
Let $\psi(r,s,t;z) = 1+s$ and suppose that $\Omega= \{w \in\mathbb{C}\colon |w-1|<e^{-1}\}$. In order to prove $\psi \in \Psi(\Omega,e^z)$, note that \[|\psi(r,s,t;z)-1|=|s|= |me^{i\theta} e^{e^{i\theta}} | = me^{\cos\theta} \geq m e^{-1} \geq e^{-1}  \]
whenever $r = e^{e^{i\theta}}$, $s = m e^{i\theta}r$ and $\real  \left(1+t/s\right)$ $\geq$ $m (1+\cos \theta )$, where $z\in \mathbb{D}$, $\theta \in [0,2\pi)$ and $m \geq 1$. Therefore  $\psi(r,s,t;z)$ $\notin$ $\Omega$ which implies $\psi \in \Psi(\Omega,e^z)$. For any $p \in\mathcal{H}_1$, we obtain \[ \left| zp'(z)\right| < e^{-1} \quad\Rightarrow\quad  p(z)\prec e^z.\]
Similarly, if we take $\psi(r,s,t;z) =r^2-r+(1+e)s+1$ with the same $\Omega$ as defined earlier, then
\begin{align*}
|\psi(r,s,t;z)-1|&=|r^2-r+(2+e)s|\\
                 &=e^{\cos\theta}|e^{e^{i\theta}}-1+(2+e)me^{i\theta}|\\
                 &\geq e^{-1}((2+e)m-|e^{e^{i\theta}}-1|)\\
                 &\geq e^{-1}((2+e)m-1-e^{\cos\theta})\\
                 &\geq e^{-1}((2+e)m-1-e)\geq e^{-1}
\end{align*}
whenever $r = e^{e^{i\theta}}$, $s = m e^{i\theta}r$ and $\real  \left(1+t/s\right)$ $\geq$ $m (1+\cos \theta )$, where $z\in \mathbb{D}$, $\theta \in [0,2\pi)$ and $m \geq 1$. By Theorem \ref{thm A}, it is easy to deduce that for any $p \in\mathcal{H}_1$, we have
\[ |p^2(z)-p(z)+(1+e)zp'(z)| < e^{-1} \quad\Rightarrow\quad  p(z)\prec e^z.\]
In the similar fashion, by taking $\psi(r,s,t;z) = 1+s/r^2$ and $\Omega$ as above, it is easily seen that
 \[|\psi(r,s,t;z)-1|=|s/r^2| = |me^{i\theta}e^{-e^{i\theta}} | = me^{-\cos\theta} \geq me^{-1} \geq e^{-1}  \] whenever $r = e^{e^{i\theta}}$, $s = m e^{i\theta}r$ and $\real  \left(1+t/s\right)$ $\geq$ $m (1+\cos \theta )$, where $\theta \in [0,2\pi)$ and $m \geq 1$. This implies that $\psi(r,s,t;z)$ $\notin$ $\Omega$ and hence $\psi \in \Psi(\Omega,e^z)$. Thus for any $p \in\mathcal{H}_1$, we have \[ \left| \frac{zp'(z)}{p^2(z)}\right| < \frac{1}{e} \quad \Rightarrow \quad  |\log p(z)| < 1.\]
 \end{example}

\begin{example}
Let $\psi(r,s,t;z) = 1+s/r$ 
and $\Omega =h(\mathbb{D})=\{w \in\mathbb{C}\colon |w-1|<1\}$, where $h(z)=1+z$. Consider \[|\psi(r,s,t;z)-1|=|s/r| = |me^{i\theta} | = m \geq 1  \] whenever $r = e^{e^{i\theta}}$, $s = m e^{i\theta}r$ and $\real  \left(1+t/s\right)$ $\geq$ $m (1+\cos \theta )$, where $z\in \mathbb{D}$, $\theta \in [0,2\pi)$ and $m \geq 1$. Therefore $\psi(r,s,t;z)$ $\notin$ $\Omega$ and hence $\psi \in \Psi(\Omega,e^z)$. Using Theorem \ref{thm A}, in terms of subordination, the result can be written as
\[ 1+ \frac{zp'(z)}{p(z)} \prec 1+z\quad \Rightarrow \quad p(z) \prec e^z\]
where $p \in\mathcal{H}_1$. Since $\psi(q(z),zq'(z),z^2q''(z);z)= 1+z=h(z)$ and $\psi \in \Psi(\Omega,q)$, where $q(z)=e^z$, it follows that $e^z$ is the best dominant by \cite[Theorem 2.3e, p.~31]{MR1760285}.
 \end{example}

\begin{example}
Let $\psi(r,s,t;z) = 2s+t$ and $\Omega =\{w \in\mathbb{C}\colon |w|<1/e\}$. Then
\begin{align*}
|\psi(r,s,t;z)|&=|2s+t| = |s|\left|2+\frac{t}{s}\right|\\
                 &\geq me^{\cos \theta}\real\left(2+\frac{t}{s}\right)\\
                 &\geq me^{\cos \theta}(1+m (1+\cos \theta ))\\
                 &\geq me^{\cos \theta}\geq e^{-1}
\end{align*}
whenever $r = e^{e^{i\theta}}$, $s = m e^{i\theta}r$ and $\real  \left(1+t/s\right)$ $\geq$ $m (1+\cos \theta )$, where $z\in \mathbb{D}$, $\theta \in [0,2\pi)$ and $m \geq 1$. This shows that $\psi(r,s,t;z)$ $\notin$ $\Omega$ and $\psi \in \Psi(\Omega,e^z)$. By Theorem \ref{thm A}, the required result is
\[ |2zp'(z)+ z^2p''(z)|<1/e \quad \Rightarrow \quad p(z) \prec e^z.\]
\end{example}
\section{Subordination Associated with the Janowski Function}

For $-1\leq B<A\leq 1$, we  consider  the  subordination  $\psi(p(z),zp'(z),z^2p''(z);$ $z) \prec (1+Az)/(1+Bz)$ implying $p(z)\prec e^z$ for $z\in\mathbb{D}$. In particular, we first estimate the bound on $\beta$  such  that the first order differential subordination $1+\beta zp'(z)/p^n(z)\prec 1+(1-\alpha)z$ (where $n$ is any non-negative integer and $0\leq \alpha <1$) implies $p(z)\prec e^z$. Throughout this paper, we will  assume that $\beta$ is a positive real number and $r$, $s$, $t$ are  same as referred in the admissibility condition  \eqref{adm}.
\begin{theorem} \label{lem2}
If $n$ is a non-negative integer, $0\leq \alpha <1$ and $p\in \mathcal{H}_1$ satisfies the subordination \[1+\beta \frac{zp'(z)}{p^n(z)} \prec 1+(1-\alpha)z, \quad \text{where } \beta \geq  \begin{cases}
	e(1-\alpha) &\quad \text{when $n=0$}\\
	e^{n-1}(1-\alpha) &\quad \text{when $n\not= 0$}
	\end{cases}\]  then $p(z) \prec e^z$.
\end{theorem}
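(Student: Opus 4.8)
The plan is to apply Theorem \ref{thm A}(i), which reduces the subordination implication to checking that the associated function $\psi$ lies in the admissibility class $\Psi(\Omega, e^z)$, where $\Omega = h(\mathbb{D})$ for $h(z) = 1+(1-\alpha)z$. Here $\Omega = \{w \in \mathbb{C} : |w-1| < 1-\alpha\}$, a disk of radius $1-\alpha$ centered at $1$. The relevant function is $\psi(r,s,t;z) = 1 + \beta s/r^n$, which depends only on $r$ and $s$, so the admissibility condition collapses to the two-variable form: I must show $\psi(r,s,t;z) \notin \Omega$ whenever $r = e^{e^{i\theta}}$ and $s = m e^{i\theta} r$ for $\theta \in [0,2\pi)$ and $m \geq 1$.

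First I would compute $|\psi(r,s,t;z) - 1| = \beta |s|/|r|^n = \beta m e^{\cos\theta} / e^{n\cos\theta} = \beta m e^{(1-n)\cos\theta}$, using that $|r| = |e^{e^{i\theta}}| = e^{\real(e^{i\theta})} = e^{\cos\theta}$ and $|s| = m|r|$. The goal is to establish $\beta m e^{(1-n)\cos\theta} \geq 1-\alpha$ for all admissible $\theta$ and $m$. Since the expression is increasing in $m$, it suffices to verify the inequality at $m=1$, reducing the problem to showing $\beta e^{(1-n)\cos\theta} \geq 1-\alpha$ uniformly over $\theta$.

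The key step is to minimize $e^{(1-n)\cos\theta}$ over $\theta \in [0,2\pi)$, where $\cos\theta$ ranges over $[-1,1]$. When $n=0$, the exponent is $\cos\theta$, whose minimum value of $e^{-1}$ occurs at $\cos\theta = -1$; when $n \geq 2$, the coefficient $1-n$ is negative, so the minimum of $e^{(1-n)\cos\theta}$ is $e^{1-n}$, attained at $\cos\theta = +1$; and when $n=1$ the factor is identically $1$. In each case the worst-case value of $\beta e^{(1-n)\cos\theta}$ is exactly $\beta e^{-1}$ for $n=0$ and $\beta e^{1-n}$ for $n \neq 0$. Plugging in the stated lower bounds on $\beta$ — namely $\beta \geq e(1-\alpha)$ for $n=0$ and $\beta \geq e^{n-1}(1-\alpha)$ for $n \neq 0$ — yields $\beta e^{-1} \geq 1-\alpha$ and $\beta e^{1-n} \geq 1-\alpha$ respectively, giving $|\psi - 1| \geq 1-\alpha$ as required. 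This places $\psi(r,s,t;z)$ outside $\Omega$, confirming $\psi \in \Psi(\Omega, e^z)$, and Theorem \ref{thm A}(i) then delivers $p(z) \prec e^z$.

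I expect the main obstacle to be purely bookkeeping: correctly tracking the sign of the exponent $1-n$ and identifying which extreme of $\cos\theta$ furnishes the minimum in each regime, since the optimizing value of $\theta$ flips between $\cos\theta = -1$ (for $n=0$) and $\cos\theta = +1$ (for $n \geq 2$). The case split in the hypothesis on $\beta$ is precisely dictated by this sign change, so the calculation is routine once the three cases $n=0$, $n=1$, and $n \geq 2$ are separated; the only genuine care needed is to verify that the monotonicity in $m$ indeed lets one restrict to $m=1$.
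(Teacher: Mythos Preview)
Your proposal is correct and follows essentially the same approach as the paper: both set $\Omega=\{w:|w-1|<1-\alpha\}$, take $\psi(r,s,t;z)=1+\beta s/r^n$, compute $|\psi-1|=\beta m e^{(1-n)\cos\theta}$, and minimize over $\theta$ (and $m\geq 1$) to obtain $|\psi-1|\geq 1-\alpha$ before invoking Theorem~\ref{thm A}. Your explicit identification of the minimizing $\theta$ in the three regimes $n=0$, $n=1$, $n\geq 2$ is slightly more detailed than the paper's two-case treatment, but the argument is the same.
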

\begin{proof} Let $h(z)=1+(1-\alpha)z$, where $z\in \mathbb{D}$, $0\leq \alpha <1$ and $\Omega =h(\mathbb{D})=\{w \in\mathbb{C}\colon |w-1|<1-\alpha\}$.
	
	Case (i). If $n=0$, consider the function $\psi(r,s,t;z)= 1+\beta s$. Then
\[\psi(p(z),zp'(z),z^2p''(z);z)\prec 1+(1-\alpha)z.\]
Theorem \ref{thm A} is applicable if we show that $\psi\in \Psi(\Omega,e^z)$,  that is, $\psi(r,s,t;z)\not\in\Omega$ whenever $r = e^{e^{i\theta}}$, $s = m e^{i\theta}r$ and $\real  \left(1+t/s\right)$ $\geq$ $m (1+\cos \theta )$, where $z\in \mathbb{D}$, $\theta \in [0,2\pi)$ and $m \geq 1$. A simple calculation yields
\[|\psi(r,s,t;z)-1|=\beta m e^{\cos\theta}\geq \beta e^{-1}\geq 1-\alpha.\]
Hence $\psi(r,s,t;z)$ $\notin$ $\Omega$ which gives $\psi \in \Psi(\Omega,e^z)$. Using Theorem \ref{thm A}, we get $p(z)\prec e^z$.
	
	Case (ii). When $n\not= 0$, the function $\psi(r,s,t;z) = 1+\beta s/r^n$ satisfies
 \begin{equation*}
	|\psi(r,s,t;z)-1|= \beta m e^{-(n-1)\cos\theta} \geq \beta e^{-(n-1)}\geq1-\alpha
	\end{equation*}
	whenever $r = e^{e^{i\theta}}$, $s = m e^{i\theta}r$ and $\real  \left(1+t/s\right)$ $\geq$ $m (1+\cos \theta )$, where $z\in \mathbb{D}$, $\theta \in [0,2\pi)$ and $m \geq 1$. Therefore as argued in Case (i), $\psi(r,s,t;z)$ $\notin$ $\Omega$ which implies $\psi \in \Psi(\Omega,e^z)$. Hence by Theorem \ref{thm A}, we have the desired result.
\end{proof}
\begin{remark}
For the case $n=1$, Theorem \ref{lem2} reduces to  \cite[Theorem 2.8b, p.~376]{MR3394060} when $A=1-\alpha$ and $B=0$.
\end{remark}
Consequently, \textit{if a function $f\in\mathcal{H}$ satisfies the subordination \[1+\beta \left(\frac{zf'(z)}{f(z)}\right)^{1-n}\left(1-\frac{zf'(z)}{f(z)}+\frac{zf''(z) }{f'(z)} \right) \prec 1+(1-\alpha)z  \] where $0\leq \alpha <1$ and the bound on $\beta$ is defined as in Theorem \ref{lem2}, then $f \in \mathcal{S}^*_e$. }

\bigskip Next, the bound on $\beta$ is determined such that the first order differential subordination $1+\beta zp'(z)/p^{n+1}(z) \prec (2+z)/(2-z)$ (where $n$ is any non-negative integer) implies $p(z)\prec e^z$.
\begin{theorem} \label{lem3}
If $n$ is any non-negative integer and $p\in \mathcal{H}_1$ satisfies the subordination \[1+\beta\frac{zp'(z)}{p^{n+1}(z)} \prec \frac{2+z}{2-z}, \quad \text{where } \beta \geq 2e^{n}\]  then $p(z) \prec e^z$.
\end{theorem}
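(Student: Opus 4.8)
The plan is to mirror the proof of Theorem \ref{lem2} and reduce the statement to a verification of the admissibility condition \eqref{adm} via Theorem \ref{thm A}(i). Set $h(z)=(2+z)/(2-z)$ and $\Omega=h(\mathbb{D})$, and choose the candidate admissible function $\psi(r,s,t;z)=1+\beta s/r^{n+1}$, so that the hypothesis becomes exactly $\psi(p(z),zp'(z),z^2p''(z);z)\prec h(z)$. Since $h$ is univalent, this subordination forces $\psi(p(z),zp'(z),z^2p''(z);z)\in\Omega$ for every $z\in\mathbb{D}$. It then suffices to show $\psi\in\Psi(\Omega,e^z)$, after which Theorem \ref{thm A}(i) delivers $p(z)\prec e^z$.

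First I would identify $\Omega$ explicitly. Inverting the M\"obius map $w=(2+z)/(2-z)$ gives $z=2(w-1)/(w+1)$, so the condition $|z|<1$ reads $2|w-1|<|w+1|$; squaring and simplifying turns this into the disk $\Omega=\{w\in\mathbb{C}\colon |w-5/3|<4/3\}$ (a quick sanity check: $h(1)=3$ and $h(-1)=1/3$ are the endpoints of the real diameter). Establishing that $\psi(r,s,t;z)\notin\Omega$ on the admissible data therefore amounts to proving the lower bound $|\psi(r,s,t;z)-5/3|\geq 4/3$.

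Next I would evaluate $\psi$ on the boundary data of \eqref{adm}. With $r=e^{e^{i\theta}}$ and $s=me^{i\theta}r$ one computes $s/r^{n+1}=me^{i\theta}e^{-ne^{i\theta}}$, whence $|s/r^{n+1}|=me^{-n\cos\theta}$. Writing $\psi(r,s,t;z)-5/3=-2/3+\beta s/r^{n+1}$ and applying the reverse triangle inequality gives $|\psi(r,s,t;z)-5/3|\geq \beta m e^{-n\cos\theta}-2/3$. Since $n\geq 0$ forces $e^{-n\cos\theta}\geq e^{-n}$ (the minimum occurring at $\cos\theta=1$) and $m\geq 1$, the hypothesis $\beta\geq 2e^{n}$ yields $\beta m e^{-n\cos\theta}\geq \beta e^{-n}\geq 2$, so $|\psi(r,s,t;z)-5/3|\geq 2-2/3=4/3$. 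Hence $\psi(r,s,t;z)\notin\Omega$ and $\psi\in\Psi(\Omega,e^z)$.

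I expect no serious obstacle here: because $\psi$ depends only on $r$ and $s$, the third admissibility inequality $\real(1+t/s)\geq m(1+\cos\theta)$ plays no role, and the estimate is a one-line triangle-inequality argument once $\Omega$ is known. The only points requiring genuine care are correctly converting $h(\mathbb{D})$ into the disk centered at $5/3$ and tracking that the worst case of $e^{-n\cos\theta}$ is $e^{-n}$, which is precisely what pins down the sharp constant $\beta\geq 2e^{n}$ in the hypothesis.
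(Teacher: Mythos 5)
Your proposal is correct and follows essentially the same route as the paper: the same admissible function $\psi(r,s,t;z)=1+\beta s/r^{n+1}$, the same reduction to verifying $\psi\notin\Omega$ via Theorem \ref{thm A}, and the same worst case $\cos\theta=1$ pinning down $\beta\geq 2e^n$. The only (cosmetic) difference is in the final estimate: you rewrite $\Omega=h(\mathbb{D})$ as the explicit disk $|w-5/3|<4/3$ and apply the reverse triangle inequality, whereas the paper keeps $\Omega$ in the form $|(2w-2)/(w+1)|<1$ and bounds that quantity below by the increasing function $x\mapsto 2x/(2+x)$ evaluated at $x=\beta m e^{-n\cos\theta}\geq 2$; both computations are equivalent and equally short.
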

\begin{proof}
By considering the function $\psi(r,s,t;z)= 1+\beta s/r^{n+1}$ and $\Omega = \{w\in\mathbb{C} \colon |(2w-2)/(w+1) | <1 \}$, it suffices to show  $\psi\in\Psi(\Omega, e^z)$. For this, note that
 	\begin{equation*} 	\left|\frac{2\psi(r,s,t;z)-2}{\psi(r,s,t;z)+1}\right|
	 \geq \frac{2\beta m e^{-n\cos\theta} }{2+\beta m e^{-n\cos\theta}}. 	\end{equation*}
Since the real-valued function $g(x)=2x/(2+x)$ is increasing for $x\geq 0$ and  $\beta m e^{-n\cos\theta}\geq 2$, it is easy to deduce that
\begin{equation*} 	\left|\frac{2\psi(r,s,t;z)-2}{\psi(r,s,t;z)+1}\right|\geq 1 	\end{equation*}
whenever $r = e^{e^{i\theta}}$, $s = m e^{i\theta}r$ and $\real  \left(1+t/s\right)$ $\geq$ $m (1+\cos \theta )$, where $z\in \mathbb{D}$, $\theta \in [0,2\pi)$ and $m \geq 1$.
Hence by making use of Theorem \ref{thm A} we get the required result.
\end{proof}
\begin{remark}
The case $n=0$ in Theorem \ref{lem3} is similar to \cite[Theorem 2.8b, p.~376]{MR3394060} for $A=1/2$ and $B=-1/2$.
\end{remark}
As a result, we have

\textit{ If a function $f\in\mathcal{H}$  satisfies the subordination \[1+\beta \left(\frac{zf'(z)}{f(z)}\right)^{-n}\left(1-\frac{zf'(z)}{f(z)}+\frac{zf''(z) }{f'(z)} \right)  \prec \frac{2+z}{2-z} \]
	where   $\beta \geq 2e^n$ and $n$ is any non-negative integer, then $f \in \mathcal{S}^*_e$.}

\bigskip The next theorem provides a bound on $\alpha$ and  $\beta$ such that the first order differential subordination $(1-\alpha)p(z)+\alpha p^2(z)+\beta zp'(z)\prec 1+z$  implies $p(z)\prec e^z$.
\begin{theorem} \label{lem11}
	Let $\alpha$, $\beta$ be positive real numbers satisfying   $\alpha(e-1)+\beta e \geq e$ and  $p\in \mathcal{H}_1$. If  the following subordination
	\[(1-\alpha)p(z)+\alpha p^2(z)+\beta zp'(z)\prec 1+z \] holds, then $p(z)\prec e^z$.
\end{theorem}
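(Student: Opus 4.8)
The plan is to recast the hypothesis as an admissibility statement and invoke Theorem \ref{thm A}. Set $h(z)=1+z$, so that $\Omega=h(\mathbb{D})=\{w\in\mathbb{C}\colon|w-1|<1\}$, and define $\psi(r,s,t;z)=\alpha r^{2}+(1-\alpha)r+\beta s$. The given subordination then reads $\psi(p(z),zp'(z),z^{2}p''(z);z)\prec h(z)$, so by Theorem \ref{thm A} it suffices to prove $\psi\in\Psi(\Omega,e^{z})$, i.e.\ that $\psi(r,s,t;z)\notin\Omega$, equivalently $|\psi(r,s,t;z)-1|\ge1$, whenever $r=e^{e^{i\theta}}$, $s=me^{i\theta}r$, $m\ge1$ and $\theta\in[0,2\pi)$. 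Since $\psi$ does not involve $t$, the third admissibility constraint is vacuous here.

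First I would factor $\psi(r,s,t;z)-1=\alpha r^{2}+(1-\alpha)r-1+\beta s=(r-1)(\alpha r+1)+\beta s$ and bound its modulus below by projecting onto the direction $e^{-i\theta}$: because $|e^{-i\theta}|=1$ and $e^{-i\theta}\beta s=\beta m r$,
\[|\psi(r,s,t;z)-1|\ge\real\!\left(e^{-i\theta}(r-1)(\alpha r+1)\right)+\beta m\,\real(r).\]
The decisive simplification is that $\real(r)=e^{\cos\theta}\cos(\sin\theta)>0$ for every $\theta$ (as $|\sin\theta|\le1<\pi/2$), so the right-hand side is increasing in $m$ and its infimum over $m\ge1$ is attained at $m=1$; this removes the parameter $m$ cleanly and avoids the quadratic-in-$m$ analysis that a direct $|\cdot|^{2}$ computation would force. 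Using $\real(r^{k}e^{-i\theta})=e^{k\cos\theta}\cos(\theta-k\sin\theta)$ for $k=1,2$, the bound at $m=1$ becomes $D(\theta)$, where
\[D(\theta)=\alpha e^{2\cos\theta}\cos(\theta-2\sin\theta)+(1-\alpha)e^{\cos\theta}\cos(\theta-\sin\theta)+\beta e^{\cos\theta}\cos(\sin\theta)-\cos\theta,\]
and the entire theorem reduces to the single-variable inequality $D(\theta)\ge1$ for all $\theta$.

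To settle $D(\theta)\ge1$ I would again use that $D$ is increasing in $\beta$ (since $\real(r)>0$), so it is enough to treat the extremal value $\beta_{0}:=1-\alpha(1-e^{-1})$, for which the hypothesis $\alpha(e-1)+\beta e\ge e$ holds with equality. A direct substitution at $\theta=\pi$ (where $\cos\theta=-1$, $\sin\theta=0$) gives $D(\pi)=1$ exactly, so the claim becomes that $\theta=\pi$ is the global minimum of $D$ on $[0,2\pi)$. As $D$ is invariant under $\theta\mapsto2\pi-\theta$, it suffices to work on $[0,\pi]$ and show $D'(\theta)\le0$ there, i.e.\ that $D(\theta)-1$ is a nonnegative combination vanishing only at $\pi$. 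This monotonicity estimate is the main obstacle: the three exponential–trigonometric terms do not share a common sign away from $\theta=\pi$, so one must combine them with care, presumably controlling $\cos(\theta-2\sin\theta)$ and $\cos(\theta-\sin\theta)$ against $\cos(\sin\theta)$ and using $e^{\cos\theta}\ge1$ exactly when $\cos\theta\ge0$ to absorb the residual $-\cos\theta$ term. Once $D(\theta)\ge1$ is secured, we obtain $|\psi(r,s,t;z)-1|\ge1$, hence $\psi\in\Psi(\Omega,e^{z})$, and Theorem \ref{thm A} delivers $p(z)\prec e^{z}$.
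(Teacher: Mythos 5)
Your setup coincides with the paper's: same $\psi(r,s,t;z)=(1-\alpha)r+\alpha r^2+\beta s$, same $\Omega=\{w\colon|w-1|<1\}$, and the same target inequality $|\psi(r,s,t;z)-1|\ge 1$ on the admissibility set. The divergence, and the problem, is in how you try to establish that inequality. You replace $|\psi-1|$ by the projection $\real\bigl(e^{-i\theta}(\psi-1)\bigr)$, reduce to $m=1$ and to the extremal $\beta_0=1-\alpha(1-e^{-1})$, and then assert that everything follows "once $D(\theta)\ge 1$ is secured" --- which you explicitly do not secure. That alone leaves the proof incomplete at its only nontrivial step. Worse, the step cannot be secured: the projection is lossy away from $\theta=0,\pi$, and $D(\theta)\ge 1$ is actually \emph{false} for admissible parameters. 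Take $\alpha=1$, $\beta=e^{-1}$ (so $\alpha(e-1)+\beta e=e$, the boundary case), $m=1$, $\theta=2$. Then
\[
D(2)=e^{2\cos 2}\cos(2-2\sin 2)+e^{-1}e^{\cos 2}\cos(\sin 2)-\cos 2\approx 0.4280+0.1491+0.4162\approx 0.993<1,
\]
even though a direct computation gives $|\psi(r,s,t;z)-1|\approx 1.425$ at that point. So the direction $e^{-i\theta}$, chosen to make the $\beta s$ term real and positive, discards too much of the modulus for $\alpha$ near $1$ (numerically the bound already dips below $1$ around $\alpha\approx 0.95$). A further minor issue: for $\alpha>e/(e-1)$ your "extremal" $\beta_0$ is negative, so the reduction to $\beta=\beta_0$ is not legitimate there; the relevant infimum over positive $\beta$ is $\beta\to 0^+$.

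For comparison, the paper does not project: it writes out $|\psi(r,s,t;z)-1|^2=:g(\theta)$ in full (real and imaginary parts separately), argues via the second derivative test that $g$ is minimized at $\theta=\pi$, and evaluates
\[
g(\pi)=\frac{\bigl(\alpha(e-1)+e(e-1+\beta m)\bigr)^2}{e^4}\ge\frac{\bigl(\alpha(e-1)+e(e-1+\beta)\bigr)^2}{e^4}\ge 1,
\]
the last step being exactly the hypothesis $\alpha(e-1)+\beta e\ge e$. Your observation that $\theta=\pi$ is where equality occurs is correct, and your monotonicity-in-$m$ and -in-$\beta$ remarks are sound as far as they go; but to complete the argument you must work with the full modulus (or its square), not a one-directional projection.
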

\begin{proof} Let $\Omega = h(\mathbb{D}) =\{w\in\mathbb{C} \colon |w-1 | <1 \}$, where $h(z)=1+z$. If $\psi(r,s,t;z)  =(1-\alpha) r+\alpha r^2+\beta s$, the required subordination is proved if we show that $\psi\in \Psi(\Omega,e^z)$ in view of Theorem \ref{thm A}. Observe that
	\begin{align*}	|\psi(r,s,t;z)-1|^2
	 & = \big((1-\alpha)e^{\cos\theta} \cos(\sin\theta)+\alpha e^{2\cos\theta} \cos(2\sin\theta)+\beta m e^{\cos\theta} \cos \theta \cos(\sin\theta) \\ & \quad{}-\beta m e^{\cos\theta} \sin\theta \sin(\sin\theta)-1\big)^2  +\big((1-\alpha)e^{\cos\theta} \sin(\sin\theta)+\alpha e^{2\cos\theta} \sin(2\sin\theta) \\ & \quad{} +\beta m e^{\cos\theta} \cos \theta \sin(\sin\theta) +\beta m e^{\cos\theta} \sin\theta\cos(\sin\theta) \big)^2  =:g(\theta).
	\end{align*} 	
The second derivative test shows that the function $g$ attains its minimum value at $\theta=\pi$ for $\alpha>0$ and $\beta>0$. Therefore for all $\theta\in[0,2\pi)$
\begin{align*}
  g(\theta) &\geq g(\pi) = \frac{(\alpha (e-1) + e (e-1 + \beta m))^2}{e^4}  \\
   & \geq \frac{(\alpha (e-1) + e (e-1 + \beta))^2}{e^4}\geq 1
\end{align*}
by the given condition $\alpha (e-1)+\beta e \geq e$. Thus $\psi(r,s,t;z) \not\in\Omega$ whenever $r = e^{e^{i\theta}}$, $s = m e^{i\theta}r$ and $\real  \left(1+t/s\right)$ $\geq$ $m (1+\cos \theta )$, where $z\in \mathbb{D}$, $\theta \in [0,2\pi)$ and $m \geq 1$. By Definition \ref{1.1}, $\psi \in \Psi(\Omega,e^z)$ and the result is evident by Theorem \ref{thm A}.
\end{proof}
Thus \textit{if $\alpha (e-1)+\beta e \geq e$ and $f\in\mathcal{H}$   satisfies the following subordination
	\[ \left((1-\alpha)+(\alpha-\beta)\frac{zf'(z)}{f(z)}\right)\frac{zf'(z)}{f(z)}+\beta\frac{zf'(z)}{f(z)}\left(1+\frac{zf''(z) }{f'(z)} \right)  \prec 1+z \]
then $f \in \mathcal{S}^*_e$. }

\bigskip Next, we determine the bounds on $\beta$ such that the first order differential subordinations $p(z)+\beta zp'(z)/p^n(z)\prec (2+2z)/(2-z)$, where $n=0,1$  implies $p(z)\prec e^z$.
\begin{theorem} \label{lem12}
	Let $p\in \mathcal{H}_1$. Then both the following conditions are sufficient for $p(z)\prec e^z$:
	\begin{enumerate}[(a)]
		\item $p(z)+\beta zp'(z)\prec (2+2z)/(2-z)$ for $\beta\geq (e+2-\sqrt{2}(e-1))/(e(\sqrt{2}-1)) \approx 2.0323$.
		\item $p(z)+\beta zp'(z)/p(z)\prec (2+2z)/(2-z)$ for $\beta\geq (e+2-\sqrt{2}(e-1))/(\sqrt{2}-1) \approx 5.52436$.
	\end{enumerate}
\end{theorem}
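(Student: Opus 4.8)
The plan is to invoke Theorem~\ref{thm A}(i) in both parts, so the entire task reduces to identifying $\Omega$ and verifying the admissibility condition \eqref{adm}. Writing $h(z)=(2+2z)/(2-z)$ and inverting gives $z=2(w-1)/(w+2)$, so $\Omega=h(\mathbb{D})=\{w\colon 2|w-1|<|w+2|\}$; squaring this shows $\Omega$ is precisely the disk $\{w\colon|w-2|<2\}$. For part~(a) I take $\psi(r,s,t;z)=r+\beta s$ and for part~(b) $\psi(r,s,t;z)=r+\beta s/r$. Both are independent of $t$, so the hypothesis $\real(1+t/s)\ge m(1+\cos\theta)$ plays no role, and it remains to check $\psi(r,s,t;z)\notin\Omega$ on the admissibility set $r=e^{e^{i\theta}}$, $s=me^{i\theta}r$. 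Substituting gives $\psi=e^{e^{i\theta}}(1+\beta m e^{i\theta})$ in case~(a) and $\psi=e^{e^{i\theta}}+\beta m e^{i\theta}$ in case~(b).

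Next I would reduce to $m=1$, where the bound is least favourable, and then show $|\psi-2|\ge 2$ for all $\theta\in[0,2\pi)$. Rather than minimize $|\psi-2|$ directly — its minimum over $\theta$ migrates off the real axis once $\beta$ is large, so $\theta=0$ is no longer a minimum of $|\psi-2|$ — I would prove the stronger inequality $\sqrt{2}\,|\psi-1|\ge|\psi+2|$. Squaring, this reads $|\psi|^2-8\real\psi-2\ge 0$, that is, $|\psi-4|\ge 3\sqrt{2}$; since $\Omega\subset\{w\colon|w-4|<3\sqrt{2}\}$, this does force $\psi\notin\Omega$. The gain is that the deficit $|\psi-4|^2=|\psi|^2-8\real\psi+16$ is minimized over $\theta$ at the single convenient point $\theta=0$.

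At $\theta=0$ and $m=1$ one has $\psi=e+\beta$ in case~(b) and $\psi=e(1+\beta)=e+e\beta$ in case~(a), the coefficient of $\beta$ being $1$ in the first case and $e$ in the second; this is exactly what produces the factor $e$ separating the two thresholds. The requirement $|\psi-4|\ge 3\sqrt{2}$ then reads $e+\beta-4\ge 3\sqrt{2}$ in case~(b) and $e+e\beta-4\ge 3\sqrt{2}$ in case~(a), i.e. $\beta\ge 4+3\sqrt{2}-e$ and $\beta\ge(4+3\sqrt{2}-e)/e$. A one-line rearrangement, using $(\sqrt2-1)(4+3\sqrt2-e)=e+2-\sqrt2(e-1)$, rewrites these as the stated constants $(e+2-\sqrt{2}(e-1))/(\sqrt{2}-1)$ and $(e+2-\sqrt{2}(e-1))/(e(\sqrt{2}-1))$. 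Once $\psi\notin\Omega$ is established, Theorem~\ref{thm A}(i) delivers $p(z)\prec e^z$.

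I expect the genuine obstacle to be the claim that $|\psi-4|$ attains its minimum at $\theta=0$ (and $m=1$): because of the nested trigonometric terms $\cos(\sin\theta)$ and $\sin(\sin\theta)$ occurring in $\real\psi$ and $|\psi|^2$, this must be argued by a derivative computation in the spirit of the second-derivative test used in Theorem~\ref{lem11}, checking that $\theta=0$ is a critical point of $g(\theta)=|\psi-4|^2$ and a local minimum there, and that the remaining interior critical points yield larger values. Confirming this for case~(a), where $\psi$ no longer traces a circle, is likely the most delicate point of the argument.
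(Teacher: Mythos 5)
Your proposal follows the paper's proof in all essentials: the same admissible functions $\psi(r,s,t;z)=r+\beta s$ and $r+\beta s/r$, the same appeal to Theorem~\ref{thm A}, and the same target inequality $|(\psi-1)/(\psi+2)|\ge 1/\sqrt{2}$ evaluated at the extremal point $\theta=0$, $m=1$ (which is exactly where both thresholds for $\beta$ arise); your recasting of that inequality as $|\psi-4|\ge 3\sqrt{2}$, via $\Omega=\{w\colon|w-2|<2\}\subset\{w\colon|w-4|<3\sqrt{2}\}$, is a pleasant computational simplification rather than a different route. The step you flag as delicate --- that $\theta=0$ gives the minimum --- is precisely the step the paper itself dispatches with ``it is easily verified,'' so your outline is no less complete than the published argument.
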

\begin{proof}Define $h\colon\mathbb{D}\to\mathbb{C}$ by $h(z)= (2+2z)/(2-z)$ and suppose that  $\Omega=h(\mathbb{D})=\{w\in\mathbb{C}\colon |(w-1)/(w+2)|<1/2 \}$.
	
	(a) As done earlier in the previous results, the function $\psi(r,s,t;z)=r+\beta s$ should satisfies $\psi (r,s,t;z) \not\in\Omega$ whenever $r = e^{e^{i\theta}}$, $s = m e^{i\theta}r$ and $\real  \left(1+t/s\right)$ $\geq$ $m (1+\cos \theta )$, where $z\in \mathbb{D}$, $\theta \in [0,2\pi)$ and $m \geq 1$. Observe that \begin{equation*}
	\left|\frac{\psi(r,s,t;z)-1}{\psi(r,s,t;z)+2}\right|^2 = \frac{(1+\beta m \cos\theta-e^{-\cos\theta}\cos(\sin\theta))^2+(\beta m \sin\theta+e^{-\cos\theta}\sin(\sin\theta))^2}{(1+\beta m \cos\theta+2e^{-\cos\theta}\cos(\sin\theta))^2+(\beta m \sin\theta-2e^{-\cos\theta}\sin(\sin\theta))^2}.
	\end{equation*} It is easily verified that the minimum value of the function in the right hand side of the above equation occurs at $\theta=0$ and therefore we obtain \[ \left|\frac{\psi(r,s,t;z)-1}{\psi(r,s,t;z)+2}\right|^2 \geq \frac{(e-1+\beta e m)^2}{(e+2+\beta e m)^2} \geq \frac{(e(1+\beta)-1)^2}{(e(1+\beta)+2)^2}\geq \frac{1}{2} \]
since $\beta\geq (e+2-\sqrt{2}(e-1))/(e(\sqrt{2}-1))$.
	
(b) The required subordination is proved if we show that the function $\psi(r,s,t;z)=r+\beta s/r$ does not lie in $\Omega$. For $\beta\geq (e+2-\sqrt{2}(e-1))/(\sqrt{2}-1)$, using the same technique as in previous case, we have
\begin{align*}
\left|\frac{\psi(r,s,t;z)-1}{\psi(r,s,t;z)+2}\right|^2 & = \frac{(e^{\cos\theta}\cos(\sin\theta)+\beta m \cos\theta-1)^2+(e^{\cos\theta}\sin(\sin\theta)+\beta m \sin\theta)^2}{(e^{\cos\theta}\cos(\sin\theta)+\beta m \cos\theta+2)^2+(e^{\cos\theta}\sin(\sin\theta)+\beta m \sin\theta)^2} \\
& \geq\left( \frac{\beta m +e-1}{\beta m +e+2}\right)^2 \geq \frac{1}{2}
\end{align*} whenever $r = e^{e^{i\theta}}$, $s = m e^{i\theta}r$ and $\real  \left(1+t/s\right)$ $\geq$ $m (1+\cos \theta )$, where $z\in \mathbb{D}$, $\theta \in [0,2\pi)$ and $m \geq 1$. Therefore using Theorem \ref{thm A}, we have $p(z)\prec e^z$.
\end{proof}
As a consequence, we obtain

\textit{If a function $f\in\mathcal{H}$ satisfies either of the following subordinations \begin{enumerate}[(a)]
		\item
		$\dfrac{zf'(z)}{f(z)}+\beta\dfrac{zf'(z)}{f(z)} \left(1-\dfrac{zf'(z)}{f(z)} +\dfrac{zf''(z)}{f'(z)}\right)\prec \dfrac{2+2z}{2-z}$
		for $\beta \geq \dfrac{e+2-\sqrt{2}(e-1)}{e(\sqrt{2}-1)}$
		\item 	$\dfrac{zf'(z)}{f(z)}+\beta \left(1-\dfrac{zf'(z)}{f(z)} +\dfrac{zf''(z)}{f'(z)}\right) \prec \dfrac{2+2z}{2-z}$
		for $\beta \geq  \dfrac{e+2-\sqrt{2}(e-1)}{\sqrt{2}-1}$
	\end{enumerate} then $f \in \mathcal{S}^*_e$. }

\section{Subordination Associated With The Exponential Function}
In this section, we consider the problem of determining the conditions under which the subordination $\psi(p(z),zp'(z),$ $z^2p''(z);z) \prec e^z$ implies that $p(z)\prec e^z$ also holds. Alternatively, our aim is to show that $\psi\in\Psi\{e^z\}:=\Psi(\Delta, e^z)$ for various choices of $\psi$, where $\Delta:=\{w\in\mathbb{C}\colon |\log w|<1 \}$. The first theorem of this section estimates the bound on $\beta$ such that  the first order differential subordination $1+\beta (zp'(z))^n \prec e^z$ (where $n$ is any positive integer) implies $p(z)\prec e^z$. Recall that, for $z\not=0$ \[\log z = \ln|z|+i\arg z= \ln ( x^2+y^2)^{1/2}+ i \tan^{-1}(y/x) \quad \text{for } x>0. \]
\begin{theorem} \label{lem1}
	If $n$ is any positive integer and $p\in\mathcal{H}_1$ satisfies the subordination \[1+\beta (zp'(z))^n \prec e^z, \quad \text{where } \beta \geq  \begin{cases}
	e^{n+1}+e^{n} \quad \text{when $n$ is odd}\\
	e^{n+1}-e^n \quad \text{when $n$ is even}
	\end{cases}\] then $p(z) \prec e^z$.
\end{theorem}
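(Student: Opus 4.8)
The plan is to apply Theorem \ref{thm A}(ii) with $\psi(r,s,t;z) = 1 + \beta s^n$ and $\Omega = \Delta$, so that everything reduces to verifying $\psi \in \Psi(\Delta, e^z)$. Since $\psi$ does not involve $t$, the admissibility condition \eqref{adm} only requires $\psi(r,s,t;z) \notin \Delta$, i.e. $|\log(1 + \beta s^n)| \geq 1$, whenever $r = e^{e^{i\theta}}$ and $s = m e^{i\theta} r$ with $\theta \in [0,2\pi)$ and $m \geq 1$. Because $|\log w| \geq |\ln|w||$, it suffices to prove the modulus estimate $|1 + \beta s^n| \geq e$; this forces $\ln|1 + \beta s^n| \geq 1$ and hence $1 + \beta s^n \notin \Delta$.

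First I would record the quantity to be bounded. Writing $e^{i\theta} = \cos\theta + i\sin\theta$ gives $\beta s^n = R\, e^{i\phi}$ with $R := \beta m^n e^{n\cos\theta}$ and $\phi := n(\theta + \sin\theta)$, so that
\[ |1 + \beta s^n|^2 = 1 + 2R\cos\phi + R^2. \]
Two preliminary reductions are then natural. Since $\beta e^{-n} \geq e - 1 > 1$ in either parity, we always have $R \geq \beta e^{-n} > 1$; consequently the right-hand side is strictly increasing in $m$ (its $m$-derivative has the sign of $\cos\phi + R > 0$), and it suffices to treat $m = 1$. Second, the elementary bound $\cos\phi \geq -1$ yields the clean estimate $|1 + \beta s^n|^2 \geq (R - 1)^2$.

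For odd $n$ this already finishes the argument: with $m = 1$ we have $R = \beta e^{n\cos\theta} \geq \beta e^{-n} \geq e + 1$, whence $(R-1)^2 \geq e^2$ and $|1 + \beta s^n| \geq e$ for every $\theta$, with no information about $\phi$ needed. The even case is where the real work lies, since there the crude bound only yields $(R-1)^2 \geq (e-2)^2 < e^2$ at $\theta = \pi$. The point is that the minimum of $|1 + \beta s^n|^2$ is attained exactly at $\theta = \pi$, where $\phi = n\pi$ gives $\cos\phi = (-1)^n = 1$ and therefore $|1 + \beta s^n|^2 = (R+1)^2 = (\beta e^{-n} + 1)^2$; the stated bound $\beta \geq e^{n+1} - e^n = e^n(e-1)$ is precisely what makes this equal to $e^2$.

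The main obstacle is thus establishing global minimality at $\theta = \pi$ in the even case. To do this I would split the circle: on the range where $R \geq e + 1$ the bound $(R-1)^2 \geq e^2$ again suffices, so only the values of $\theta$ with $R < e + 1$ remain; these force $(e-1)e^{n(1 + \cos\theta)} \leq R < e+1$, i.e. $e^{n(1+\cos\theta)} < (e+1)/(e-1)$, so $n(1 + \cos\theta)$ is small and $\theta$ is confined to a shrinking neighbourhood of $\pi$. On this neighbourhood the deviation $\phi - n\pi = n(\theta + \sin\theta - \pi)$ is correspondingly small (note $\tfrac{d}{d\theta}(\theta + \sin\theta) = 1 + \cos\theta$ vanishes at $\pi$), so $\cos\phi$ stays near $+1$ and $|1 + \beta s^n|^2$ stays near $(R+1)^2 \geq e^2$. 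Making these two regimes meet is the technical heart of the matter, and it is most cleanly handled by a second-derivative test confirming that $\theta = \pi$ is the unique minimiser, exactly in the spirit of the proof of Theorem \ref{lem11}. Once $|1 + \beta s^n| \geq e$ is secured in both parities, $\psi \in \Psi(\Delta, e^z)$ and Theorem \ref{thm A}(ii) delivers $p(z) \prec e^z$.
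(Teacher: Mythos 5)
Your reduction to the modulus estimate $|1+\beta s^n|\ge e$ via $|\log w|\ge|\ln|w||$ is sound, and your treatment of the odd case is complete and in fact cleaner than the paper's: the crude bound $|1+\beta s^n|\ge R-1$ with $R=\beta m^n e^{n\cos\theta}\ge \beta e^{-n}\ge e+1$ works uniformly in $\theta$ with no calculus at all, whereas the paper's proof invokes a second-derivative test at $\theta=\pi$ even for odd $n$. (The paper follows the same overall scheme: it writes $|\log\psi(r,s,t;z)|^2=\tfrac14\ln^2(\mu^2+\nu^2)+(\tan^{-1}(\mu/\nu))^2=:g(\theta)$ and argues in both parities that $g$ is minimised at $\theta=\pi$.)

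The even case, however, is left with a genuine gap. You correctly identify that $\cos\phi\ge-1$ is insufficient there and that the value at $\theta=\pi$ (where $\cos\phi=1$ and $|1+\beta s^n|=1+\beta e^{-n}\ge e$) is the right target, but global minimality at $\theta=\pi$ is only sketched. Your two-regime idea ($R\ge e+1$ versus $R<e+1$, the latter confining $\theta$ to a neighbourhood of $\pi$) is workable: in the second regime one gets $n\sin^2((\theta-\pi)/2)<\tfrac12\ln\frac{e+1}{e-1}$, and one must then verify quantitatively that the gain in $R$ (which grows like $e^{2n\sin^2((\theta-\pi)/2)}$ above $e-1$) dominates the loss $2R(1-\cos\phi)$ coming from $\phi-n\pi=n\bigl((\theta-\pi)-\sin(\theta-\pi)\bigr)=O\bigl(n|\theta-\pi|^3\bigr)$. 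These estimates do close, but you do not supply them, and falling back on ``a second-derivative test confirming that $\theta=\pi$ is the unique minimiser'' does not substitute for them: positivity of $g''(\pi)$ certifies only a local minimum, not a global one over $[0,2\pi)$. (To be fair, this is the same logical weakness in the paper's own proof, which likewise infers global minimality from $g''(\pi)>0$.) As written, the hard analytic step of the even case remains to be done.
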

\begin{proof}
 The required subordination is proved if we show that the function defined as $\psi(r,s,t;z)=1+\beta s^n$ satisfies the condition $\psi(r,s,t;z)\not\in \Omega$ whenever $r = e^{e^{i\theta}}$, $s = m e^{i\theta}r$ and $\real  \left(1+t/s\right)$ $\geq$ $m (1+\cos \theta )$, where $z\in \mathbb{D}$, $\theta \in [0,2\pi)$ and $m \geq 1$. Consider
	 \[|\log \psi(r,s,t;z)|^2 =\frac{1}{4} \ln^2 (\mu^2+\nu^2) + \left(\tan^{-1}\frac{\mu}{\nu}\right)^2=: g(\theta)\]	where \[\mu= \beta m^n e^{n\cos\theta}\sin n\theta\cos(n\sin\theta)  +\beta m^n e^{n\cos\theta}\cos n\theta\sin(n\sin\theta)\] and\[ \nu =1+\beta m^n e^{n\cos\theta}\cos n\theta\cos(n\sin\theta) -\beta m^n e^{n\cos\theta}\sin n\theta\sin(n\sin\theta). \]
	
	Case (i). When $n$ is odd and $\beta \geq e^{n+1}+e^n$, we have\[g''(\pi) =\frac{\beta m^n n\ln\left(e^{-2n}(-e^n+\beta m^n)^2\right)}{-e^n+\beta m^n} >0\]  for all $m\geq 1$. Therefore second derivative verifies that minimum value of $g$ is attained at $\theta =\pi$. If $\beta \geq e^{n+1}+e^n$, we obtain \[g(\theta) \geq g(\pi) =\frac{1}{4}\ln^2\left(1-\frac{2\beta m^n}{e^n}+\frac{\beta^2 m^{2n}}{e^{2n}}\right)\geq \frac{1}{4} \ln^2 \left(1-\frac{\beta m^n}{e^n} \right)^2 \geq 1  \] for all $\theta\in[0,2\pi)$. Thus $|\log \psi(r,s,t;z)| \geq 1$ and Theorem \ref{thm A} gives $\psi \in \Psi\{e^z\}$.
	
	Case (ii). If $n$ is even and \[g''(\pi) =\frac{\beta m^n n\ln\left(e^{-2n}(e^n+\beta m^n)^2\right)}{e^n+\beta m^n} >0\] for   $\beta>0$, the minimum value of the function $g$ is attained at $\theta =\pi$. Therefore for all $\theta\in[0,2\pi)$ and $\beta\geq e^{n+1}-e^n$, we get \[g(\theta) \geq g(\pi) =\frac{1}{4}\ln^2\left(1+\frac{2\beta m^n}{e^n}+\frac{\beta^2 m^{2n}}{e^{2n}}\right) \geq 1. \]This implies that $\psi \in \Psi\{e^z\}$. Hence Theorem \ref{thm A} gives the desired differential subordination.
\end{proof}
Now, we estimate the bound on $\beta$ such that the first order differential subordination $1+\beta zp'(z)/p^{n+1}(z)\prec e^z$ (where $n$ is any non-negative integer) implies $p(z)\prec e^z$.
\begin{theorem} \label{lem4}
If $p\in\mathcal{H}_1$ satisfies the subordination \[1+\beta\frac{zp'(z)}{p^{n+1}(z)} \prec e^z, \quad \text{where } \beta \geq e^{n+1}-e^n\] and $n$ is any non-negative integer, then $p(z) \prec e^z$.
\end{theorem}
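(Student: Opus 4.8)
The plan is to recast the implication inside the admissibility machinery of Theorem \ref{thm A}(ii), exactly as in the proof of Theorem \ref{lem1}. I would put $\psi(r,s,t;z) = 1 + \beta s/r^{n+1}$ and take $\Omega = \Delta = \{w : |\log w| < 1\}$, so that the hypothesis becomes $\psi(p(z),zp'(z),z^2p''(z);z)\prec e^z$. It then suffices to show $\psi \in \Psi\{e^z\}=\Psi(\Delta,e^z)$, that is, $|\log\psi(r,s,t;z)|\geq 1$ whenever $r = e^{e^{i\theta}}$, $s = m e^{i\theta}r$ and $\real(1+t/s)\geq m(1+\cos\theta)$, with $\theta\in[0,2\pi)$ and $m\geq 1$.

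The first computation is the substitution of the admissibility data. Since $r^{-(n+1)}s = m e^{i\theta}e^{-n e^{i\theta}} = m e^{-n\cos\theta}e^{i(\theta - n\sin\theta)}$, one obtains $\psi = 1 + \beta m e^{-n\cos\theta}e^{i(\theta - n\sin\theta)}$. Writing $\nu=\real\psi$ and $\mu=\imag\psi$, I would form the quantity $g(\theta):=|\log\psi|^2 = \tfrac14\ln^2(\mu^2+\nu^2) + (\arctan(\mu/\nu))^2$, exactly mirroring the function analyzed in Theorem \ref{lem1}. Observe that the coordinate $t$ enters only through the inequality on $\real(1+t/s)$ and plays no role here, because $\psi$ is independent of $t$; the entire argument is carried by the bound on $\beta$ and by $m\geq 1$.

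The crux is to locate the minimizer of $g$ over $[0,2\pi)$, and this is where the present theorem departs from Theorem \ref{lem1}. The modulus factor is now $e^{-n\cos\theta}$ rather than $e^{+n\cos\theta}$, so $|\psi-1| = \beta m e^{-n\cos\theta}$ is \emph{smallest} at $\theta=0$; that is the angle at which $\psi$ is driven closest to $\partial\Delta$. I would therefore argue, via the second derivative test, that $g$ attains its minimum at $\theta=0$ (and not at $\theta=\pi$): using that $\psi(-\theta)=\overline{\psi(\theta)}$ forces $g$ to be even about $\theta=0$, we get $g'(0)=0$, and one checks $g''(0)>0$. At $\theta=0$ we have $\mu=0$ and $\nu = 1+\beta m e^{-n}$, whence $g(0) = \ln^2(1+\beta m e^{-n}) \geq \ln^2\!\big(1+(e^{n+1}-e^n)e^{-n}\big) = \ln^2 e = 1$, using $\beta\geq e^{n+1}-e^n$ and $m\geq 1$. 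This gives $|\log\psi|\geq 1$, so $\psi\in\Psi\{e^z\}$, and Theorem \ref{thm A} delivers $p(z)\prec e^z$. Note also that the bound is sharp, with equality forced at $\theta=0$, $m=1$, $\beta=e^{n+1}-e^n$.

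I expect the main obstacle to be rigorously certifying that $\theta=0$ is the \emph{global} minimizer, since the second derivative test yields only a local statement and one should exclude competing interior critical points. A clean geometric remedy: near $w=e$, both the boundary curve $\phi\mapsto e^{e^{i\phi}}$ of $\partial\Delta$ and the trace $\theta\mapsto\psi(\theta)$ leave the real axis, and a second-order comparison matching imaginary parts shows that $\psi$ stays on the exterior side of $\partial\Delta$; combined with the fact that $|\psi-1|=\beta m e^{-n\cos\theta}\geq e-1$ grows away from $\theta=0$ while $\Delta$ sits in the bounded annulus $\{e^{-1}<|w|<e\}$, this pins $\theta=0$ as the binding angle and gives $g\geq g(0)$ throughout. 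Finally, I would remark that wherever $\nu<0$ the principal-branch expression for $\arctan(\mu/\nu)$ \emph{understates} $|\log\psi|^2$ by the term $\pi^2$, so the displayed lower bound remains valid there as well.
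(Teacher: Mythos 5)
Your proposal follows essentially the same route as the paper: the same admissible function $\psi(r,s,t;z)=1+\beta s/r^{n+1}$, the same reduction to $|\log\psi(r,s,t;z)|\geq 1$ via the function $g(\theta)=\tfrac14\ln^2(\mu^2+\nu^2)+(\tan^{-1}(\mu/\nu))^2$, the same identification of $\theta=0$ as the minimizer by the second-derivative test, and the same evaluation $g(0)=\ln^2(1+\beta m e^{-n})\geq\ln^2 e=1$. The only substantive difference is that the paper certifies $g''(0)>0$ explicitly through an auxiliary function $u(x)$ where you write ``one checks,'' while you are more candid than the paper about the local-versus-global minimum issue; your side observation that $|\psi-1|=\beta m e^{-n\cos\theta}\geq e-1$ already places $\psi$ outside $\Delta$ is in fact the cleanest way to close that gap.
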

\begin{proof} We apply Theorem \ref{thm A} to show that $\psi\in \Psi\{e^z\}$, where $\psi(r,s,t;z)= 1+\beta s/r^n$. Whenever $r = e^{e^{i\theta}}$, $s = m e^{i\theta}r$ and $\real  \left(1+t/s\right)$ $\geq$ $m (1+\cos \theta )$, where $z\in \mathbb{D}$, $\theta \in [0,2\pi)$ and $m \geq 1$, note that
\[	|\log \psi(r,s,t;z)|^2
	 = \frac{1}{4}\ln^2 (\mu^2+\nu^2)+ \left(\tan^{-1}\frac{\mu}{\nu}\right)^2=: g(\theta)\]
where
\[\mu=\beta m e^{-n\cos\theta}\sin\theta\cos(n\sin\theta) -\beta m e^{-n\cos\theta}\cos\theta\sin(n\sin\theta)\] and \[\nu= 1+\beta m e^{-n\cos\theta}\cos\theta\cos(n\sin\theta)+\beta m e^{-n\cos\theta}\sin\theta\sin(n\sin\theta).\]
	Let $u(x)=x(-1+n)^2-\left(-xn+e^n(1-3n+n^2)\right)\ln \left(e^{-n}(e^n+x)\right)$, where $x>0$ and $n$ is a non-negative integer. Natural logarithm being an increasing function implies that  $\ln(e^n+x)>\ln(e^n)$ for $x>0$, that is, $\ln(e^n+x)>n$ for $x>0$. This gives \[u(x)>x(1-2n)+n\left(xn-e^n(1-3n+n^2)\right)+ne^n(1-3n+n^2)=x(1-n)^2>0\] for $x>0$ and $n\not=1$. In particular for $n=1$, $u(x)=(x+e)(\ln(x+e)-1)>0$ for $x>0$.
	Therefore  \[g''(0) = \frac{\beta m \left(2 \beta m (-1 + n)^2 -\left(-\beta m n + e^n (1 - 3 n + n^2)\right) \ln\left(e^{-2n}(e^n+\beta m)^2\right)\right)}{(e^n+\beta m)^2}>0\] for $\beta>0$, which implies, using second derivative test, $g$ attains its minimum value at $\theta=0$.
	Hence for all $\theta\in[0,2\pi)$ and $\beta \geq e^{n+1}-e^{n}$
 \[g(\theta) \geq g(0) =\frac{1}{4}\ln^2\left(1+\frac{2\beta m}{e^n}+\frac{\beta^2 m^2}{e^{2n}}\right)=\frac{1}{4}\ln^2\left(1+\frac{\beta m}{e^n}\right)^2 \geq \frac{1}{4}\ln^2\left(1+\frac{e^{n+1}-e^{n}}{e^n}\right)^2 =1.\]
  Thus $|\log \psi(r,s,t;z)| \geq 1$ which implies $\psi \in \Psi\{e^z\}$.   
\end{proof}
\begin{remark}
	For $n=0$, Theorem \ref{lem4} reduces to \cite[Theorem 2.8a, p.~376]{MR3394060}.
\end{remark}
In the next two theorems, the bound on $\beta$ is computed such that the first order differential subordination $p(z)+\beta zp'(z)/p^{n+1}$ $(z)\prec e^z$ (where $n=-1, 0,1,2,\ldots$) implies $p(z)\prec e^z$.
\begin{theorem} \label{lem5}
	Let $p\in \mathcal{H}_1$, then each of the following subordinations are sufficient for $p(z)\prec e^z$:
	\begin{enumerate}[(a)]
		\item $p(z)+\beta zp'(z) \prec e^z$ for $\beta \geq e^2+1 \approx 8.38906$.
		\item $p(z)+\beta zp'(z)/p(z) \prec e^z$ for $\beta \geq e+e^{-1} \approx 3.08616$.
	\end{enumerate}
\end{theorem}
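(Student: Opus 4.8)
The plan is to invoke Theorem~\ref{thm A}(ii): in each part it suffices to show that the relevant $\psi$ belongs to $\Psi\{e^z\}=\Psi(\Delta,e^z)$, i.e.\ that $\psi(r,s,t;z)\notin\Delta$ whenever $r=e^{e^{i\theta}}$, $s=me^{i\theta}r$ and $\real(1+t/s)\ge m(1+\cos\theta)$ with $\theta\in[0,2\pi)$ and $m\ge 1$. Since neither candidate $\psi$ depends on $t$, the inequality on $\real(1+t/s)$ plays no role. The key simplification is that, writing $\Delta=\{w:|\log w|<1\}$ and using $|\log w|^2=(\ln|w|)^2+(\arg w)^2\ge(\ln|w|)^2$, it is enough to prove the single modulus bound $|\psi(r,s,t;z)|\ge e$; this forces $\ln|\psi|\ge 1$, hence $|\log\psi|\ge 1$ and $\psi\notin\Delta$. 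Thus the whole argument reduces to minimising one modulus over $\theta\in[0,2\pi)$ and $m\ge 1$, and both stated bounds on $\beta$ are exactly the thresholds making that minimum equal to $e$.

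For part (a) I take $\psi(r,s,t;z)=r+\beta s$, so that after the admissibility substitutions $\psi=r(1+\beta m e^{i\theta})=e^{e^{i\theta}}(1+\beta m e^{i\theta})$ and hence $|\psi|=e^{\cos\theta}(1+2\beta m\cos\theta+\beta^2m^2)^{1/2}$. Differentiating $\ln|\psi|=\cos\theta+\tfrac12\ln(1+2\beta m\cos\theta+\beta^2m^2)$ gives $\frac{d}{d\theta}\ln|\psi|=-\sin\theta\bigl(1+\beta m/|1+\beta m e^{i\theta}|^2\bigr)$, whose bracket is strictly positive. Consequently the only critical points in $[0,2\pi)$ are $\theta=0$ (a maximum) and $\theta=\pi$ (the global minimum). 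At $\theta=\pi$ one has $|\psi|=e^{-1}(\beta m-1)$, which for $m\ge 1$ and $\beta\ge e^2+1$ is at least $e^{-1}(\beta-1)\ge e^{-1}(e^2+1-1)=e$. Hence $|\psi|\ge e$ everywhere and the conclusion follows.

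For part (b) I take $\psi(r,s,t;z)=r+\beta s/r$, so that $s/r=m e^{i\theta}$ and $\psi=e^{e^{i\theta}}+\beta m e^{i\theta}$, giving $|\psi|^2=e^{2\cos\theta}+2\beta m e^{\cos\theta}\cos(\theta-\sin\theta)+\beta^2m^2$. The two summands $e^{e^{i\theta}}$ and $\beta m e^{i\theta}$ become exactly antiparallel at $\theta=\pi$, where the reverse triangle inequality is tight and $|\psi|=\beta m-e^{-1}$; for $m\ge 1$ and $\beta\ge e+e^{-1}$ this is at least $\beta-e^{-1}\ge e$. The remaining task is to confirm that $\theta=\pi$ is genuinely the global minimiser of $|\psi|^2$ for every $m\ge 1$. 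I would establish this with a second-derivative test at $\theta=\pi$ together with a check that no stationary point on $(0,\pi)$ yields a smaller value, exactly along the lines used for $g(\theta)$ in the proofs of Theorems~\ref{lem1} and~\ref{lem4}; the evenness $|\psi(-\theta)|=|\psi(\theta)|$ reduces the analysis to $[0,\pi]$. Once $|\psi|\ge e$ is in hand, Theorem~\ref{thm A}(ii) delivers $p(z)\prec e^z$.

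I expect the monotonicity verification in part (b) to be the only genuine obstacle. Part (a) closes by an elementary one-variable calculus argument because the logarithmic derivative factors cleanly, whereas in part (b) the term $\cos(\theta-\sin\theta)$ obstructs such a factorisation, so one must argue more carefully that the antiparallel configuration at $\theta=\pi$ is the true minimum and not merely a stationary point.
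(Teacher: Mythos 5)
Your overall strategy is the paper's: verify the admissibility condition for $\psi=r+\beta s$ and $\psi=r+\beta s/r$ and invoke Theorem~\ref{thm A}(ii). The genuine difference is your reduction of $\psi\notin\Delta$ to the single modulus bound $|\psi|\ge e$ (valid because $\Delta\subset\{w\colon e^{-1}<|w|<e\}$), which discards the $(\arg\psi)^2$ term that the paper carries through its function $g(\theta)=\tfrac14\ln^2(\mu^2+\nu^2)+(\tan^{-1}(\mu/\nu))^2$. For part (a) this buys a complete and fully elementary argument: your factorisation $\frac{d}{d\theta}\ln|\psi|=-\sin\theta\,\bigl(1+\beta m/|1+\beta me^{i\theta}|^2\bigr)$ identifies $\theta=\pi$ as the unique global minimiser with no second-derivative test, whereas the paper only checks $g''(\pi)>0$ for $\beta$ above a threshold and asserts that the minimum is ``clearly'' at $\pi$. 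Part (a) of your proposal is therefore correct and, if anything, tighter than the published proof.

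In part (b), however, the step you defer --- that $\theta=\pi$ globally minimises $|e^{e^{i\theta}}+\beta me^{i\theta}|$ --- is where all the content lies, and you have not supplied it (the paper does no better: it merely ``observes'' that the second derivative is positive at both critical points). Be aware that this step is delicate because the bound is tight: at $\beta m=e+e^{-1}$ the minimum value of $|\psi|$ is exactly $e$, so the crude reverse triangle inequality $|\psi|\ge\beta m-e^{\cos\theta}$ (i.e.\ replacing $\cos(\theta-\sin\theta)$ by $-1$) fails for $\theta$ near but not equal to $\pi$, where $e^{\cos\theta}>e^{-1}$; you must use that the two summands are antiparallel only at $\theta=\pi$. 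A concrete way to close the gap without locating critical points is to write
\[
|\psi|^2-(\beta m-e^{-1})^2=\bigl(e^{2\cos\theta}-e^{-2}\bigr)+2\beta m\bigl(e^{\cos\theta}\cos(\theta-\sin\theta)+e^{-1}\bigr),
\]
where the first bracket is trivially nonnegative; it then remains to prove the elementary inequality $e^{1+\cos\theta}\cos(\theta-\sin\theta)\ge-1$ for all $\theta$, with equality only at $\theta=\pi$. Granting that lemma, $|\psi|\ge\beta m-e^{-1}\ge e$ and Theorem~\ref{thm A}(ii) finishes as you say; without it, part (b) remains an announced plan rather than a proof.
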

\begin{proof} (a) In order to prove the admissibility condition \eqref{adm} for the function $\psi(r,s,t;z)= r+\beta s$, we need to show that $|\log \psi(r,s,t;z)|^2\geq 1$ whenever $r = e^{e^{i\theta}}$, $s = m e^{i\theta}r$ and $\real  \left(1+t/s\right)$ $\geq$ $m (1+\cos \theta )$, where $z\in \mathbb{D}$, $\theta \in [0,2\pi)$ and $m \geq 1$.  A simple computation gives
\begin{align*} 	|\log \psi(r,s,t;z)|^2
	& =\frac{1}{4}\ln^2 (e^{2\cos\theta} +\beta^2 m^2 e^{2\cos\theta} +2\beta m e^{2\cos\theta}\cos\theta) \\ & \quad  + \left(\tan^{-1}\left(\frac{\sin(\sin\theta)+\beta m \cos\theta\sin(\sin\theta)+\beta m \sin\theta\cos(\sin\theta)}{\cos(\sin\theta)+\beta m \cos\theta\cos(\sin\theta)-\beta m \sin\theta\sin(\sin\theta)}\right)\right)^2=:g(\theta).
	\end{align*}
Note that \[g''(\pi) =\frac{2\beta m(1-\beta m) +(1-\beta m+\beta^2m^2)\ln((-1+\beta m)^2)}{(-1+\beta m)^2} >0\] for $\beta >\beta^*\approx 3.4446$, where $\beta^*$ is the root of the equation $x (1-x) + (1 - x + x^2)\ln(-1 + x)=0$. Therefore the minimum value of the function $g$ is clearly attained at $\theta =\pi$ for $\beta \geq e^2+1 \approx 8.38906$. In that case, we have \[g(\theta) \geq g(\pi) =\frac{1}{4}\ln^2\left(\frac{1}{e^2}-\frac{2\beta m}{e^2}+\frac{\beta^2 m^2}{e^2}\right) \geq 1 \quad\text{for all }\theta\in[0,2\pi).\] Hence $\psi \in \Psi\{e^z\}$.  
	
	(b) Using the same technique as above, for the function $\psi(r,s,t;z)= r+\beta s/r$, consider
	\begin{align*} |\log \psi(r,s,t;z)|^2
	& = \frac{1}{4}\ln^2 (e^{2\cos\theta} +\beta^2 m^2+ 2\beta me ^{\cos\theta} \cos\theta \cos(\sin\theta) +2\beta m e^{\cos\theta}\sin\theta\sin(\sin\theta)) \\ & \quad  + \left(\tan^{-1}\frac{e^{\cos\theta} \sin(\sin\theta)+\beta m \sin \theta}{e^{\cos\theta} \cos(\sin\theta)+\beta m \cos \theta}\right)^2 =:g(\theta).
	\end{align*}
We observe that the second derivative of  $g$ is positive on both of its critical points, therefore 
 the  absolute minimum of $g$ is attained at $\theta=\pi$ for $\beta\geq e+e^{-1}$. Hence we get \[g(\theta) \geq g(\pi) =\frac{1}{4}\ln^2\left(\frac{1}{e^2}-\frac{2\beta m}{e}+\beta^2 m^2\right)\geq 1 \quad \text{for all }\theta\in[0,2\pi).\] Thus $\psi\in\Psi\{e^z\}$ and Theorem \ref{thm A} completes the proof. 
\end{proof}
\begin{theorem} \label{lem6}
	Let $n$ be any positive integer and $\beta_n$ be a positive root of the equation
	\begin{equation}\label{root}
	  \left(e^{1 + n} - x (-1 + n)\right)^2- \left(e^{2+2 n} n - x^2 n + 	x e^{1 + n} (1 - n + n^2)\right)  \ln(e+e^{-n}x) =0.
	\end{equation} If $p\in\mathcal{H}_1$  satisfies the subordination \[p(z)+\beta\frac{zp'(z)}{p^{n+1}(z)} \prec e^z, \quad \text{for } \beta >\beta_n \] then $p(z) \prec e^z$.
\end{theorem}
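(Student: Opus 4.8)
The plan is to follow the template of Theorems \ref{lem4} and \ref{lem5} and to verify, via part (ii) of Theorem \ref{thm A}, that the function $\psi(r,s,t;z)=r+\beta s/r^{n+1}$ belongs to $\Psi\{e^z\}=\Psi(\Delta,e^z)$; equivalently, that $|\log\psi(r,s,t;z)|\geq1$ whenever $r=e^{e^{i\theta}}$, $s=me^{i\theta}r$ and $\real(1+t/s)\geq m(1+\cos\theta)$, with $\theta\in[0,2\pi)$ and $m\geq1$. Since $\psi$ is independent of $t$, only the substitutions for $r$ and $s$ are used.

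First I would insert $r=e^{e^{i\theta}}$ and $s=me^{i\theta}r$ and split $\psi$ into $\nu=\real(\psi)$ and $\mu=\imag(\psi)$. Using $r=e^{\cos\theta}(\cos(\sin\theta)+i\sin(\sin\theta))$ together with $\beta s/r^{n+1}=\beta m e^{-n\cos\theta}(\cos(\theta-n\sin\theta)+i\sin(\theta-n\sin\theta))$ yields explicit trigonometric formulas for $\mu$ and $\nu$, and hence, by the recalled branch of the logarithm,
\[|\log\psi(r,s,t;z)|^2=\frac{1}{4}\ln^2(\mu^2+\nu^2)+\left(\tan^{-1}\frac{\mu}{\nu}\right)^2=:g(\theta).\]
As $g$ is even about both $\theta=0$ and $\theta=\pi$, these two points are critical, and the problem becomes one of locating the global minimum of $g$ on $[0,2\pi)$ for each fixed $m\geq1$.

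The crucial claim is that the minimum sits at $\theta=0$. There $\mu=0$ and $\nu=e+\beta m e^{-n}$, so
\[g(0)=\frac{1}{4}\ln^2\left((e+\beta m e^{-n})^2\right)=\ln^2(e+\beta m e^{-n})>1,\]
the strict inequality being automatic because $\beta m e^{-n}>0$ forces $e+\beta m e^{-n}>e$. Thus, once $\theta=0$ is identified as the global minimiser, the admissibility bound $g(\theta)\geq g(0)>1$ is immediate. To certify minimality I would use the second derivative test: a direct (if lengthy) computation shows that, with $x=\beta m$, the sign of $g''(0)$ coincides with that of
\[\left(e^{1+n}-x(n-1)\right)^2-\left(e^{2+2n}n-x^2n+xe^{1+n}(1-n+n^2)\right)\ln(e+e^{-n}x),\]
which is precisely the left-hand side of \eqref{root} and whose positive zero is $\beta_n$. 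Hence $g''(0)>0$ as soon as $x>\beta_n$; since $m\geq1$ and $\beta>\beta_n$ give $x=\beta m\geq\beta>\beta_n$, this holds for every admissible $m$, so $\theta=0$ is a local minimum. Comparing with the remaining critical value $g(\pi)=\ln^2(\beta m e^n-e^{-1})$, which exceeds $g(0)$ for $n\geq1$, confirms that $\theta=0$ furnishes the global minimum. Therefore $g(\theta)\geq g(0)>1$ for all $\theta$, so $\psi\in\Psi\{e^z\}$ and Theorem \ref{thm A}(ii) delivers $p(z)\prec e^z$.

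The main obstacle is the sign analysis of $g''(0)$: one must differentiate $g$ twice at $\theta=0$, reduce $g''(0)$ to a positive multiple of the single-variable expression above in $x=\beta m$, and then show that this expression stays positive throughout $x>\beta_n$. The latter amounts to establishing the correct monotonicity of the bracketed quantity beyond its positive root $\beta_n$, where the bound $\ln(e+e^{-n}x)>1$ is used. A secondary point, treated exactly as in Theorems \ref{lem4} and \ref{lem5}, is to guarantee that $\theta=0$ is the genuine global minimiser, i.e.\ that no interior critical point yields a smaller value of $g$.
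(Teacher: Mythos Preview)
Your proposal is correct and follows essentially the same route as the paper: you take $\psi(r,s,t;z)=r+\beta s/r^{n+1}$, form $g(\theta)=|\log\psi|^2$, use the second-derivative test at $\theta=0$ to identify the minimum, observe that the sign of $g''(0)$ is governed by the left-hand side of \eqref{root} with $x=\beta m$, and conclude $g(\theta)\geq g(0)=\ln^2(e+\beta m e^{-n})>1$. The paper presents exactly the same computation of $g''(0)$ (in unsimplified form) and the same lower bound $g(0)\geq\frac{1}{4}\ln^2(e^2)=1$; your version is in fact slightly more careful in making the substitution $x=\beta m$ explicit and in flagging the comparison with $g(\pi)$ and possible interior critical points, which the paper passes over.
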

\begin{proof}
As argued in other cases, to prove the required subordinaton, it suffices to show that the function $\psi(r,s,t;z)=r+\beta s/r^{n+1}$ satisfies $\psi(r,s,t;z)\not\in\Delta$  whenever $r = e^{e^{i\theta}}$, $s = m e^{i\theta}r$ and $\real  \left(1+t/s\right)$ $\geq$ $m (1+\cos \theta )$, where $z\in \mathbb{D}$, $\theta \in [0,2\pi)$ and $m \geq 1$.  Note that
	 \begin{align*}|\log \, \psi(r,s,t;z)|^2 & = \frac{1}{4}\ln^2 (e^{2\cos\theta} +\beta^2 m^2 e^{-2n\cos\theta} +2\beta m e^{(1-n)\cos\theta}\cos\theta\cos(\sin\theta)\cos(n\sin\theta)\\ & \quad  +2\beta m e^{(1-n)\cos\theta}\sin\theta\cos(\sin\theta)\sin(n\sin\theta) +2\beta m e^{(1-n)\cos\theta}\sin\theta\sin(\sin\theta) \\ & \quad \cos(n\sin\theta)  -2\beta m e^{(1-n)\cos\theta}\cos\theta\sin(\sin\theta)\sin(n\sin\theta)) + \left(\tan^{-1}(\chi)\right)^2 =:g(\theta)
	\end{align*} where \[\chi=\frac{e^{(n+1)\cos\theta}\sin(\sin\theta)  + \beta m  \sin\theta\cos(n\sin\theta) -\beta m  \cos\theta\sin(n\sin\theta)}{e^{(n+1)\cos\theta}\cos(\sin\theta)  +\beta m \cos\theta\cos(n\sin\theta)  +\beta m \sin\theta\sin(n\sin\theta)}. \]
	If $\beta>\beta_n$, where $\beta_n$ is a positive root of the equation \eqref{root},  then  \begin{gather*} g''(0) =\frac{2 \left(e^{1 + n} - \beta m (-1 + n)\right)^2 - \left(e^{2+2 n} n - \beta^2 m^2 n + 	\beta  e^{1 + n}m (1 - n + n^2)\right)  \ln\left((e+\beta e^{-n}m)^2\right)}{(e^{1+n}+\beta m)^2}.  >0\end{gather*}  Therefore the minimum value of $g$ is attained at $\theta =0$ by the second derivative test which implies for all $\theta\in[0,2\pi)$ and $\beta>\beta_n>0$ \[g(\theta) \geq g(0) =\frac{1}{4}\ln^2\left(e^{2} + \frac{2\beta m}{e^{n-1}}+\frac{\beta^2 m^2}{e^{2n}}\right) \geq \frac{1}{4}\ln^2(e^{2}) \geq \frac{1}{4}(2)^2 =1 \]  for all positive integers $m$ and $n$. Hence $\psi\in\Psi\{e^z\}$ and Theorem \ref{thm A} gives the desired result.
\end{proof}
Next the bound on $\beta$ is determined such that each of the first order differential subordination $p(z)+\beta (zp'(z))^2/p^n(z)\prec e^z$ ($n =0,1,2$) implies $p(z)\prec e^z$.
\begin{theorem} \label{lem7}
	Let $p \in\mathcal{H}_1$. Then each of the following subordinations are sufficient for $p(z)\prec e^z$:
	\begin{enumerate}[(a)]
		\item $p(z)+\beta (zp'(z))^2 \prec e^z$ for $\beta \geq e^3-e \approx 17.3673$.
		\item $p(z)+\beta (zp'(z))^2/p(z) \prec e^z$ for $\beta \geq e^2-1 \approx 6.38906$.
		\item  $p(z)+\beta (zp'(z))^2/p^2(z) \prec e^z$ for $\beta \geq e-e^{-1} \approx 2.3504$.
	\end{enumerate}
\end{theorem}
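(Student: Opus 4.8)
The plan is to treat parts (a)--(c) by the uniform template of this section: exhibit a function $\psi$, verify that $\psi\in\Psi\{e^z\}=\Psi(\Delta,e^z)$ by showing $|\log\psi(r,s,t;z)|\geq 1$ on the admissibility locus, and then conclude via Theorem~\ref{thm A}(ii). For the three cases I would take
\[\psi(r,s,t;z)=r+\beta s^2,\qquad r+\frac{\beta s^2}{r},\qquad r+\frac{\beta s^2}{r^2},\]
respectively, so that $\psi(p(z),zp'(z),z^2p''(z);z)$ reproduces the prescribed left-hand sides. Substituting $r=e^{e^{i\theta}}$ and $s=me^{i\theta}r$, the perturbation term is $\beta s^2/r^{k}=\beta m^2 e^{2i\theta}r^{2-k}$ with $k=0,1,2$, which is especially transparent in part (c) where it collapses to $\beta m^2 e^{2i\theta}$. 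Writing $\nu=\real\psi$ and $\mu=\imag\psi$, I set
\[|\log\psi(r,s,t;z)|^2=\frac14\ln^2(\mu^2+\nu^2)+\Big(\tan^{-1}\frac{\mu}{\nu}\Big)^2=:g(\theta).\]
Since $\psi$ depends only on $r,s$, the admissibility condition on $t$ is vacuous and one only checks $g(\theta)\geq 1$ for all $\theta\in[0,2\pi)$ and $m\geq 1$.

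Because $r(-\theta)=\overline{r(\theta)}$ and $s(-\theta)=\overline{s(\theta)}$ (the coefficients being real), each $\psi$ above satisfies $\psi(-\theta)=\overline{\psi(\theta)}$, so $g$ is even in $\theta$. Consequently $\theta=0$ and $\theta=\pi$ are automatically critical points, and the substance of the argument is to show that $g$ is minimized at $\theta=\pi$. This I would establish by the second derivative test exactly as in Theorems~\ref{lem5} and~\ref{lem6}: compute $g''(\pi)$ in closed form and verify $g''(\pi)>0$ for the stated range of $\beta$ and every $m\geq 1$.

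At $\theta=\pi$ the evaluation is clean: there $r=e^{-1}$ and $s=-me^{-1}$ are real, so $\mu=0$ and $\nu$ equals $e^{-1}+\beta m^2e^{-2}$, $e^{-1}+\beta m^2e^{-1}$, and $e^{-1}+\beta m^2$ in cases (a), (b), (c) respectively. Hence $g(\pi)=\tfrac14\ln^2(\nu^2)=\ln^2\nu$. The bounds $\beta\geq e^3-e$, $e^2-1$, $e-e^{-1}$ are calibrated so that $\nu=e$ precisely when $m=1$, giving $g(\pi)=1$; and since $\nu$ is increasing in $m$, one has $\nu\geq e$ and thus $g(\pi)\geq 1$ for all $m\geq 1$. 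Combined with the second derivative test, this gives $g(\theta)\geq g(\pi)\geq 1$ for all $\theta\in[0,2\pi)$ and $m\geq 1$, i.e. $\psi\in\Psi\{e^z\}$, whence Theorem~\ref{thm A}(ii) yields $p(z)\prec e^z$ in each case.

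I expect the main obstacle to be the verification that $g''(\pi)>0$. Differentiating $g$ twice produces a somewhat unwieldy expression (the $\tan^{-1}$ term contributes nontrivially because $\mu$ vanishes only to first order at $\theta=\pi$), and establishing its positivity uniformly in $m\geq 1$ under each $\beta$-bound is the genuine computational content of the proof, entirely analogous to the $g''$ evaluations already displayed in Theorems~\ref{lem5} and~\ref{lem6}. The squaring in $(zp'(z))^2$ makes these expressions marginally heavier than in the linear cases, but the qualitative structure—even symmetry forcing critical points at $0$ and $\pi$, the minimum at $\pi$, and the exact boundary value $\nu=e$ at $m=1$—is identical.
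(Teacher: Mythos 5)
Your proposal matches the paper's proof essentially step for step: the same three admissible functions $\psi(r,s,t;z)=r+\beta s^2/r^k$ ($k=0,1,2$), the same reduction to showing $g(\theta)=|\log\psi|^2\geq 1$ with the minimum located at $\theta=\pi$ by the second derivative test, and the same evaluation $g(\pi)=\ln^2\!\left(e^{-1}+\beta m^2 e^{k-2}\right)\geq 1$ calibrated so that the bound on $\beta$ gives exactly $e$ inside the logarithm at $m=1$. The only difference is cosmetic (the paper writes $g(\pi)=\tfrac14\ln^2(\nu^2)$ rather than $\ln^2\nu$), and like you it treats the positivity of $g''(\pi)$ as a computational verification rather than spelling it out in full.
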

\begin{proof} For different choices of $\psi$, we need to show that $\psi\in \Psi\{e^z\}$, that is, we must verify $|\log\psi(r,s,t;z)| \geq 1$ whenever $r = e^{e^{i\theta}}$, $s = m e^{i\theta}r$ and $\real  \left(1+t/s\right)$ $\geq$ $m (1+\cos \theta )$, where $z\in \mathbb{D}$, $\theta \in [0,2\pi)$ and $m \geq 1$.
	
	(a) Let $\psi(r,s,t;z)=r+\beta s^2$ and consider
	 \begin{equation*} 	|\log \,   \psi(r,s,t;z)|^2 = \frac{1}{4}\ln^2 (\mu^2+\nu^2)  + \left(\tan^{-1}\frac{\mu}{\nu}\right)^2  =: g(\theta)
	\end{equation*}
	where \[\mu=e^{\cos\theta}\sin(\sin\theta)+\beta m^2 e^{2\cos\theta}\cos 2\theta\sin(2\sin\theta)+\beta m^2 e^{2\cos\theta}\sin 2\theta\cos(2\sin\theta)\]and \[\nu=e^{\cos\theta}\cos(\sin\theta)+\beta m^2e^{2\cos\theta} \cos 2\theta\cos(2\sin\theta)-\beta m^2 e^{2\cos\theta}\sin 2\theta\sin(2\sin\theta). \]
 Since  \[g''(\pi)= \frac{-2 (e + 2 \beta m^2)^2 + (e^2 + 2 \beta e m^2 +
		2 \beta^2 m^4) \ln\left((e + \beta m^2)^2\right)}{(e + \beta m^2)^2} >0\]  for $\beta >\beta^* \approx 3.7586$, where $\beta^*$ is a positive root of the equation $(e + 2 x)^2 - (e^2 + 2 x e + 2 x^2) \ln(e + x)=0$, we can say that the minimum value of $g$ is obviously attained at $\theta =\pi$ for $\beta \geq e^3-e$. Therefore for $\beta \geq e^3-e$, we have \[g(\theta) \geq g(\pi) =\frac{1}{4}\ln^2\left(\frac{1}{e^2}+\frac{2\beta m^2}{e^3}+\frac{\beta^2 m^4}{e^4}\right) \geq 1\quad \text{for all }\theta\in[0,2\pi).\]  Hence $ \psi(r,s,t;z)\not\in\Delta$ and using Theorem \ref{thm A} the result follows. 
	
		(b)	Let the function be defined by $\psi(r,s,t;z)= r+\beta s^2/r$ and observe
	 \begin{align*}
	 |\log \psi(r,s,t;z)|^2
	= \frac{1}{4}\ln^2 (e^{2\cos\theta} +\beta^2 m^4 e^{2\cos\theta} +2\beta m^2 e^{2\cos \theta}\cos 2\theta)  + \left(\tan^{-1}(\chi)\right)^2 =:g(\theta)
	\end{align*}
	where \[\chi=\frac{\sin(\sin\theta)+\beta m^2 \cos 2\theta\sin(\sin\theta) +\beta m^2 \sin 2\theta\cos(\sin\theta)}{\cos(\sin\theta)  +\beta m^2 \cos 2\theta\cos(\sin\theta)-\beta m^2 \sin 2\theta\sin(\sin\theta)}. \]
	It is  easily verified that the minimum value of the function $g$ is attained at $\theta=\pi$ for $\beta\geq e^2-1$. In that case for all $\theta\in[0,2\pi)$ and $\beta\geq e^2-1$ \[g(\theta) \geq g(\pi) =\frac{1}{4}\ln^2\left(\frac{1}{e^2}+\frac{2\beta m^2}{e^2}+\frac{\beta^2 m^4}{e^2}\right) \geq 1. \] Therefore $|\log \psi(r,s,t;z)| \geq 1$ whenever $r = e^{e^{i\theta}}$, $s = m e^{i\theta}r$ and $\real  \left(1+t/s\right)$ $\geq$ $m (1+\cos \theta )$, where $z\in \mathbb{D}$, $\theta \in [0,2\pi)$ and $m \geq 1$. Hence Theorem \ref{thm A} yields the desired result. 

	(c) For the function $\psi(r,s,t;z)=r+\beta s^2/r^2$, it is easy to deduce that
	\begin{align*} |\log \psi(r,s,t;z)|^2
	& = \frac{1}{4}\ln^2 (e^{2\cos\theta} +\beta^2 m^4+ 2\beta m^2e ^{\cos\theta} \cos 2\theta \cos(\sin\theta) +2\beta m^2 e^{\cos\theta}\sin 2\theta \\ & \quad \sin(\sin\theta))  + \left(\tan^{-1}\left(\frac{e^{\cos\theta} \sin(\sin\theta)+\beta m^2 \sin 2\theta}{e^{\cos\theta} \cos(\sin\theta)+\beta m^2 \cos 2\theta}\right)\right)^2 =:g(\theta).
	\end{align*}
	Since the second derivative of $g$ is positive on both of its critical points,  $g$ attains absolute minimum at $\theta=\pi$ for $\beta\geq e-e^{-1}>0$. Therefore for all $\theta\in[0,2\pi)$ and for $\beta\geq e-e^{-1}$, 	we get  \[g(\theta) \geq g(\pi) =\frac{1}{4}\ln^2\left(\frac{1}{e^2}+\frac{2\beta m^2}{e}+\beta^2 m^4\right) \geq 1.\]  Hence  $\psi(r,s,t;z)\notin \Delta$ and thus $\psi\in\Psi\{e^z\}$.
\end{proof}
Next, we estimate the bound on $\beta$ such that each of the first order differential subordination $p^2(z)+\beta zp'(z)/p^n(z)\prec e^z$ ($n =0,1,2$) implies $p(z)\prec e^z$.
\begin{theorem} \label{lem8}
	Let $p\in \mathcal{H}_1$. Then each of the following subordinations are sufficient for $p(z)\prec e^z$:
	\begin{enumerate}[(a)]
		\item $p^2(z)+\beta zp'(z)\prec e^z$ for $\beta \geq e^2+e^{-1} \approx 7.75694$.
		\item $p^2(z)+\beta zp'(z)/p(z) \prec e^z$ for $\beta \geq e+e^{-2} \approx 2.85362$.
		\item  $p^2(z)+\beta zp'(z)/p^2(z) \prec e^z$ for $\beta > \beta^*\approx 104.122$, where $\beta^*$ is a positive root of the equation \begin{equation*}6e^6+5x e^3 -x^2+(-2e^6-5xe^3+x^2) \ln(e^3 + x)=0. \end{equation*}
	\end{enumerate}
\end{theorem}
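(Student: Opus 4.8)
The plan is to verify, for each of the three exponents $n\in\{0,1,2\}$, that the associated symbol lies in the class $\Psi\{e^z\}=\Psi(\Delta,e^z)$ and then invoke Theorem~\ref{thm A}(ii). Concretely I would set $\psi(r,s,t;z)=r^2+\beta s/r^n$; since this expression is independent of $t$, the constraint $\real(1+t/s)\geq m(1+\cos\theta)$ in the admissibility condition \eqref{adm} plays no role, and it remains only to show $\psi(r,s,t;z)\notin\Delta$, i.e.\ $|\log\psi(r,s,t;z)|\geq1$, for every $r=e^{e^{i\theta}}$ and $s=me^{i\theta}r$ with $\theta\in[0,2\pi)$ and $m\geq1$. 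Writing $\nu=\real\psi$ and $\mu=\imag\psi$, I would record
\[
|\log\psi(r,s,t;z)|^2=\tfrac14\ln^2(\mu^2+\nu^2)+\Big(\tan^{-1}\tfrac{\mu}{\nu}\Big)^2=:g(\theta),
\]
exactly as in the proofs of Theorems~\ref{lem5}--\ref{lem7}, and then minimise $g$ over $\theta$.

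For parts (a) and (b) I expect the routine pattern to recur. The map $\theta\mapsto2\pi-\theta$ acts on $\psi$ by complex conjugation, so $g$ is symmetric about $\theta=\pi$ and the points $\theta=0,\pi$ are automatically critical. A second derivative test should single out $\theta=\pi$ as the location of the minimum, where $r=e^{-1}$ and $s=-m/e$ give the real value $\psi(\pi)=e^{-2}-\beta m e^{n-1}$. Hence $g(\pi)=\tfrac14\ln^2\big((\beta m e^{n-1}-e^{-2})^2\big)$, and the stated thresholds are precisely those forcing $|\psi(\pi)|\geq e$ already at $m=1$: for $n=0$ this reads $\beta m\geq e^2+e^{-1}$ and for $n=1$ it reads $\beta m\geq e+e^{-2}$, in each case yielding $g(\pi)\geq\tfrac14\ln^2(e^2)=1$ for all $m\geq1$.

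Part (c) is where the work lies, and it is the step I expect to be the main obstacle. Here $n=2$ gives $\psi(\pi)=e^{-2}-\beta m e$, so the endpoint $\theta=\pi$ would only demand $\beta m\geq1+e^{-3}$; since the true threshold is $\beta^*\approx104.122$, the minimum of $g$ is \emph{not} governed by $\theta=\pi$. Instead the relevant critical point is $\theta=0$, where $r=e$, $s=me$ and $\psi(0)=e^2+\beta m/e$ is real and positive, so that $g(0)=\ln^2(e^2+\beta m/e)\geq1$ holds automatically. The genuine difficulty is to certify that $\theta=0$ is the global minimiser of $g$, and this is what the second derivative test must deliver: a computation of $g''(0)$ produces a numerator proportional to
\[
6e^6+5xe^3-x^2+(-2e^6-5xe^3+x^2)\ln(e^3+x),\qquad x=\beta m,
\]
in which $\ln(e^3+x)=1+\ln(e^2+x/e)$ encodes $\ln\psi(0)$. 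Setting this quantity to zero defines $\beta^*$, and I would argue that for $\beta>\beta^*$ (together with $m\geq1$) it is positive, so $g''(0)>0$ and the minimum is attained at $\theta=0$, giving $g(\theta)\geq g(0)\geq1$ throughout.

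In all three parts the conclusion is identical: $|\log\psi(r,s,t;z)|\geq1$, hence $\psi\in\Psi\{e^z\}$, and Theorem~\ref{thm A}(ii) converts the hypothesis $p^2(z)+\beta zp'(z)/p^n(z)\prec e^z$ into $p(z)\prec e^z$. The only delicate point is the global-minimality claim in part (c); the even/odd symmetry of $\ln|\psi|$ and $\arg\psi$ about $\theta=0$ collapses $g''(0)$ to the two terms $2\ln\psi(0)\,(\ln|\psi|)''(0)+2\big((\arg\psi)'(0)\big)^2$, and it is the sign of this combination—controlled precisely by the displayed transcendental equation—that must be pinned down.
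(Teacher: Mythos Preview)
Your proposal is correct and follows essentially the same approach as the paper: in each case you set $\psi(r,s,t;z)=r^2+\beta s/r^n$, write $g(\theta)=|\log\psi|^2$, locate the minimum via the second derivative test at $\theta=\pi$ for (a) and (b) and at $\theta=0$ for (c), and read off the thresholds exactly as the paper does. Your additional observations---the conjugation symmetry forcing $0$ and $\pi$ to be critical, the explanation of why $\theta=\pi$ cannot govern part~(c), and the decomposition of $g''(0)$ via the even/odd splitting of $\ln|\psi|$ and $\arg\psi$---are not in the paper but are consistent with, and indeed clarify, its argument.
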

\begin{proof} (a) For the function $\psi(r,s,t;z)=r^2+\beta s$, Theorem \ref{thm A} is applicable if we show that the function $\psi\in\Psi\{e^z\}$  whenever $r = e^{e^{i\theta}}$, $s = m e^{i\theta}r$ and $\real  \left(1+t/s\right)$ $\geq$ $m (1+\cos \theta )$, where $z\in \mathbb{D}$, $\theta \in [0,2\pi)$ and $m \geq 1$. Consider
\[|\log \, \psi(r,s,t;z)|^2
	=\frac{1}{4}\ln^2 (\mu^{2}+\nu^2) + \left(\tan^{-1}\frac{\mu}{\nu}\right)^2  =:g(\theta)\]
	where \[\mu=e^{2\cos\theta}\sin(2\sin\theta)+\beta m e^{\cos\theta}\cos \theta\sin(\sin\theta)  +\beta m e^{\cos\theta}\sin \theta\cos(\sin\theta)\] and\[\nu=e^{2\cos\theta}\cos(2 \sin\theta) +\beta m e^{\cos\theta} \cos \theta\cos(\sin\theta) -\beta m e^{\cos\theta}\sin \theta\sin(\sin\theta). \]
	To show $|\log \psi(r,s,t;z)|\geq 1$ note that $g''(\pi)>0$ for  $\beta >\beta^* \approx 2.9432$, where $\beta^*$ is a  root of the equation $2xe(1+xe)-(2+xe+x^2e^2)\ln(-1+xe)=0$. Therefore minimum value of the function $g$ is obviously attained at $\theta =\pi$ for $\beta \geq e^2+e^{-1}$ and hence \[g(\theta) \geq g(\pi) =\frac{1}{4}\ln^2\left(\frac{1}{e^4}-\frac{2\beta m}{e^3}+\frac{\beta^2 m^2}{e^2}\right) \geq 1\quad\text{for all }\theta\in[0,2\pi).\] Consequently, Theorem \ref{thm A}  yields the result.
	
	(b) The required subordination is proved if we show that $\psi\in \Psi\{e^z\}$, that is, if the admissibility condition \eqref{adm} is satisfied. For the function $\psi(r,s,t;z)= r^2+\beta s/r$, observe
	\begin{align*} |\log \psi(r,s,t;z)|^2
	& = \frac{1}{4}\ln^2 (e^{4\cos\theta} +\beta^2 m^2+ 2\beta m e ^{2\cos\theta} \cos \theta \cos(2\sin\theta) +2\beta m e^{2\cos\theta}\sin \theta  \\ & \quad  \sin(2\sin\theta)) + \left(\tan^{-1}\left(\frac{e^{2\cos\theta} \sin(2\sin\theta) + \beta m \sin \theta}{e^{2\cos\theta} \cos(2\sin\theta)+\beta m \cos \theta}\right)\right)^2 =:g(\theta).
	\end{align*}
	For $\beta\geq e+e^{-2}$, it is easily verified using second derivative test that $g$ attains its minimum value at $\theta=\pi$  which implies \[g(\theta) \geq g(\pi) =\frac{1}{4}\ln^2\left(\frac{1}{e^4}-\frac{2\beta m}{e^2}+\beta^2 m^2\right)\geq 1 \quad \text{for all }\theta\in[0,2\pi).\]  Therefore  $\psi(r,s,t;z)\notin \Delta$  whenever $r = e^{e^{i\theta}}$, $s = m e^{i\theta}r$ and $\real  \left(1+t/s\right)$ $\geq$ $m (1+\cos \theta )$, where $z\in \mathbb{D}$, $\theta \in [0,2\pi)$ and $m \geq 1$. Using Theorem \ref{thm A} we get $p(z)\prec e^z$.
	
	(c)  As done in other cases, we need to show that $\psi(r,s,t;z)\notin \Delta$, where $\psi$ is defined as $\psi(r,s,t;z)=r^2+\beta s/r^2$. Consider
	 \begin{equation*} |\log  \psi(r,s,t;z)|^2
	 = \frac{1}{4}\ln^2 (\mu^2+\nu^2)  + \left(\tan^{-1}\frac{\mu}{\nu}\right)^2  =:g(\theta)
	\end{equation*} where \[\mu=e^{2\cos\theta}\sin(2\sin\theta)-\beta m e^{-\cos\theta}\cos \theta\sin(\sin\theta)+\beta m e^{-\cos\theta}\sin \theta\cos(\sin\theta)\] and \[\nu=e^{2\cos\theta}\cos(2\sin\theta)+\beta m e^{-\cos\theta}\cos \theta\cos(\sin\theta) +\beta m e^{-\cos\theta}\sin \theta\sin(\sin\theta). \]
	We note that the minimum value of $g$ is attained at $\theta=0$   for $\beta >\beta^* \approx 104.122$, where $\beta^*$ is a positive root of the equation $ 6e^6+5x e^3 -x^2+(-2e^6-5xe^3+x^2) \ln(e^3 + x)=0$.  For $\theta\in[0,2\pi)$ and $\beta>\beta^*$, we have \[g(\theta) \geq g(0) =\frac{1}{4}\ln^2\left(e^4+2\beta me+\frac{\beta^2 m^2}{e^2}\right) \geq 1.\]  Therefore $\psi\in\Psi\{e^z\}$ and hence the result.
\end{proof}

Next, the bound on $\beta$ is ascertained such that each of the first order differential subordination $p^n(z)+\beta zp(z)p'(z)\prec e^z$ ($n =1,2,3$) implies $p(z)\prec e^z$.
\begin{theorem} \label{lem9}
Let $p\in \mathcal{H}_1$, then each of the following subordinations are sufficient for $p(z)\prec e^z$:
	\begin{enumerate}[(a)]
		\item $p(z)+\beta zp(z)p'(z) \prec e^z$ for $\beta \geq e^3+e  \approx 22.8038$.
		\item $p^2(z)+\beta z p(z)p'(z)\prec e^z$ for $\beta \geq e^3+1 \approx 21.0855$.
		\item $p^3(z)+\beta z p(z)p'(z)\prec e^z$ for $\beta \geq e^3+e^{-1} \approx 20.4534$.
	\end{enumerate}
\end{theorem}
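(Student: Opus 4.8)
The plan is to handle the three parts uniformly by setting
\[
\psi(r,s,t;z) = r^n + \beta r s \qquad (n = 1,2,3),
\]
since under the correspondence $r = p(z)$, $s = zp'(z)$ one has $z p(z) p'(z) = rs$, and the prescribed bound is $\beta \geq e^3 + e^{2-n}$ in each of the cases $n=1,2,3$. By Theorem \ref{thm A}(ii) it suffices to verify that $\psi \in \Psi\{e^z\}$, that is, that $|\log \psi(r,s,t;z)| \geq 1$ whenever $r = e^{e^{i\theta}}$, $s = m e^{i\theta} r$ and $\real(1 + t/s) \geq m(1 + \cos\theta)$, for $\theta \in [0,2\pi)$ and $m \geq 1$.

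First I would record the real and imaginary parts of $\psi$. Using $r^n = e^{n\cos\theta}(\cos(n\sin\theta) + i\sin(n\sin\theta))$ together with $rs = m e^{i\theta} r^2 = m e^{2\cos\theta}(\cos(\theta + 2\sin\theta) + i\sin(\theta + 2\sin\theta))$, I would write
\[
|\log \psi(r,s,t;z)|^2 = \frac{1}{4}\ln^2(\mu^2 + \nu^2) + \left(\tan^{-1}\frac{\mu}{\nu}\right)^2 =: g(\theta),
\]
where, with $\nu = \real\psi$ and $\mu = \imag\psi$,
\[
\nu = e^{n\cos\theta}\cos(n\sin\theta) + \beta m e^{2\cos\theta}\cos(\theta + 2\sin\theta), \quad \mu = e^{n\cos\theta}\sin(n\sin\theta) + \beta m e^{2\cos\theta}\sin(\theta + 2\sin\theta).
\]
Exactly as in the proofs of Theorems \ref{lem5}--\ref{lem8}, the crux is to show that $g$ attains its minimum at $\theta = \pi$. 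I would establish this by the second derivative test, computing $g''(\pi)$ and checking $g''(\pi) > 0$ for the admissible range of $\beta$ and all $m \geq 1$, which pins $\theta = \pi$ as the minimizer.

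At $\theta = \pi$ the imaginary part $\mu$ vanishes and $\nu = e^{-n} - \beta m e^{-2}$, so the $\tan^{-1}$ term drops out and $g(\pi) = \tfrac14 \ln^2\big((e^{-n} - \beta m e^{-2})^2\big)$. The bound $\beta \geq e^3 + e^{2-n}$ forces
\[
e^{-n} - \beta m e^{-2} \leq e^{-n} - (e^3 + e^{2-n})e^{-2} = -e
\]
for every $m \geq 1$, whence $(e^{-n} - \beta m e^{-2})^2 \geq e^2$ and therefore $g(\pi) \geq \tfrac14 \ln^2(e^2) = 1$. Combining this with the minimality gives $g(\theta) \geq g(\pi) \geq 1$, so $\psi(r,s,t;z) \notin \Delta$, that is $\psi \in \Psi\{e^z\}$, and Theorem \ref{thm A} then yields $p(z) \prec e^z$.

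The main obstacle I anticipate is precisely the location of the minimum: confirming $g''(\pi) > 0$ requires an explicit (and somewhat lengthy) differentiation of the $\ln$ and $\tan^{-1}$ terms, and one must argue that $\theta = \pi$ is the \emph{global} minimizer on $[0,2\pi)$ rather than merely a critical point. A clean way to reduce the bookkeeping is to note that $\Delta$ is contained in the right half-plane, so any $\theta$ with $\real\psi \leq 0$ automatically satisfies $\psi \notin \Delta$; it then remains only to treat those $\theta$ with $\real\psi > 0$, where the formula for $g(\theta)$ genuinely computes $|\log\psi|^2$, and to verify the bound there.
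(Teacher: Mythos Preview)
Your proposal is correct and follows essentially the same approach as the paper: defining $\psi(r,s,t;z)=r^n+\beta rs$, writing $|\log\psi|^2=:g(\theta)$, using the second derivative test to locate the minimum at $\theta=\pi$, and then verifying $g(\pi)\geq 1$ from the bound $\beta\geq e^3+e^{2-n}$. The only differences are cosmetic---you treat the three cases uniformly rather than separately, and your closing remark that $\Delta$ lies in the right half-plane (so only $\real\psi>0$ needs the $\tan^{-1}$ formula) is a useful clarification the paper leaves implicit.
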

\begin{proof} The subordination $p(z)\prec e^z$ is satisfied if we show that $\psi\in \Psi\{e^z\}$ for different choices of $\psi$. Equivalently, we need to verify the admissibility condition \eqref{adm}:
\[|\log\psi(r,s,t;z)|^2 \geq 1\]
whenever $r = e^{e^{i\theta}}$, $s = m e^{i\theta}r$ and $\real  \left(1+t/s\right)$ $\geq$ $m (1+\cos \theta )$, where $z\in \mathbb{D}$, $\theta \in [0,2\pi)$ and $m \geq 1$.

(a) Let $\psi(r,s,t;z)= r+\beta rs$. A simple calculation yields
 \begin{equation*} |\log \psi(r,s,t;z)|^2 = \frac{1}{4}\ln^2 (\mu^2+\nu^2)+ \left(\tan^{-1}(\chi) \right)^2 =: g(\theta)
\end{equation*}
where \[ \mu=e^{ \cos\theta} \sin(\sin\theta) +\beta m e^{2\cos\theta} \cos\theta\sin(2\sin\theta)+\beta m e^{2 \cos\theta}\sin\theta\cos(2\sin\theta)\] and \[\nu=e^{\cos\theta}\cos(\sin\theta) +\beta m e^{2\cos\theta} \cos\theta\cos(2\sin\theta)-\beta m e^{2\cos\theta}\sin\theta\sin(2\sin\theta). \]
It is easily verified that $g''(\pi)>0$ for  $\beta >\beta^* \approx 7.7065$, where $\beta^*$ is the positive	 root of the equation $6x-2e+(e-2x) \ln((e-x)^2)=0$. Since we are given that $\beta\geq e^3+e  \approx 22.8038$, the minimum value of $g$ is attained at $\theta =\pi$ and for all $\theta\in[0,2\pi)$, we have
\begin{align*}
g(\theta) \geq g(\pi) &=\frac{1}{4}\ln^2\left(\frac{1}{e^2}-\frac{2\beta m}{e^3}+\frac{\beta^2 m^2}{e^4}\right) \\
                      &=\frac{1}{4}\ln^2\left(\frac{\beta m}{e^2}-\frac{1}{e}\right)^2\geq \frac{1}{4}\ln^2\left(\frac{e^3+e}{e^2}-\frac{1}{e}\right)^2=1.
\end{align*}
Therefore $\psi \in \Psi\{e^z\}$.  
	
(b) The function $\psi(r,s,t;z)= r^2+\beta rs$ satisfies
	\begin{equation*} |\log \psi(r,s,t;z)|^2
	 = \frac{1}{4}\ln^2 \left(e^{4\cos\theta}(\mu^2+\nu^2)\right)+ \left(\tan^{-1} \frac{\mu}{\nu}\right)^2 =: g(\theta)
	\end{equation*}where
	\[ \mu=\sin(2\sin\theta)+\beta m \sin \theta\cos(2\sin\theta)+\beta m\cos\theta \sin(2\sin\theta)\]and \[\nu=\cos(2\sin\theta) +\beta m \cos \theta\cos(2\sin\theta)-\beta m\sin\theta \sin(2\sin\theta). \]
Since $g''(\pi)>0$ for  $\beta >\beta^* \approx 6.46722$, where $\beta^*$ is the root of the equation $x (2-3x)+ (2-3x+2x^2) \ln(-1 + x)=0$, the minimum value of $g(\theta)$ is obviously attained at $\theta =\pi$ if  $\beta \geq e^3+1 \approx 20.0855$. Therefore for $\theta\in[0,2\pi)$, we get \[g(\theta) \geq g(\pi) =\frac{1}{4}\ln^2\left(\frac{1}{e^4}-\frac{2\beta m}{e^4}+\frac{\beta^2 m^2}{e^4}\right) \geq 1\]  and hence $\psi \in \Psi\{e^z\}$. 
	
	(c) Let $\psi(r,s,t;z)= r^3+\beta rs$. With $r$, $s$ and $t$ stated above, $\psi$ takes the form $\psi(r,s,t;z)=e^{3e^{i\theta}}+\beta m e^{i\theta}e^{2e^{i\theta}}$ which satisfies
 \begin{equation*}	|\log \psi(r,s,t;z)|^2
	= \frac{1}{4}\ln^2 \left(e^{4\cos\theta}(\mu^2+\nu^2)\right)+ \left(\tan^{-1}\frac{\mu}{\nu}\right)^2 =:g(\theta)
	\end{equation*}
	where \[\mu=e^{\cos\theta} \sin(3\sin\theta)  +\beta m \cos\theta\sin(2\sin\theta)+\beta m \sin\theta\cos(2\sin\theta)\] and \[\nu=e^{\cos\theta}\cos(3\sin\theta)+\beta m \cos\theta\cos(2\sin\theta)-\beta m \sin\theta\sin(2\sin\theta). \]
Clearly, $g$ attains its minimum value either at $\theta=0$ or $\theta=\pi$. Since $g''(\pi)>0$  for $\beta >\beta^* \approx 5.66489$, where $\beta^*$ is the root of the equation $x e(3+5x e )-(3-x e+2x^2e^2) \ln(-1+ xe)=0$, the minimum value of $g$ is attained at $\theta =\pi$ if $\beta \geq e^3+e^{-1} \approx 20.4534$. In that case, we have \[g(\theta) \geq g(\pi) =\frac{1}{4}\ln^2\left(\frac{1}{e^6}-\frac{2\beta m}{e^5}+\frac{\beta^2 m^2}{e^4}\right)\geq 1\quad \text{for all }\theta\in[0,2\pi).\]
Therefore $\psi \in \Psi\{e^z\}$ by Theorem \ref{thm A}. 
\end{proof}
Now, we estimate the bound on $\beta$ such that the first order differential subordination $p^2(z)+p(z)-1+\beta zp'(z)\prec e^z$  implies $p(z)\prec e^z$.
\begin{theorem} \label{lem10}
	Let $p\in \mathcal{H}_1$ and satisfies the subordination
	\[p^2(z)+p(z)-1+\beta zp'(z)\prec e^z \quad \text{for }\beta \geq e^2+e^{-1}-e+1 \approx 6.03865. \] Then $p(z)\prec e^z$.
\end{theorem}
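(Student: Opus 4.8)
The plan is to invoke Theorem~\ref{thm A}(ii) with the choice $\psi(r,s,t;z)=r^2+r-1+\beta s$, so that it suffices to verify $\psi\in\Psi\{e^z\}=\Psi(\Delta,e^z)$. Since $\psi$ is independent of $t$, the admissibility condition \eqref{adm} reduces to checking that $|\log\psi(r,s,t;z)|\geq 1$ (equivalently $\psi(r,s,t;z)\notin\Delta$) whenever $r=e^{e^{i\theta}}$ and $s=me^{i\theta}r$, for $\theta\in[0,2\pi)$ and $m\geq 1$. First I would substitute these values and split $\psi$ into its real part $\nu$ and imaginary part $\mu$, using $r=e^{\cos\theta}(\cos(\sin\theta)+i\sin(\sin\theta))$ and $r^2=e^{2\cos\theta}(\cos(2\sin\theta)+i\sin(2\sin\theta))$, so as to obtain
\[|\log\psi(r,s,t;z)|^2=\frac14\ln^2(\mu^2+\nu^2)+\left(\tan^{-1}\frac{\mu}{\nu}\right)^2=:g(\theta),\]
exactly in the spirit of Theorems~\ref{lem5}--\ref{lem9}.

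Next I would locate the minimum of $g$ on $[0,2\pi)$. Following the pattern established throughout Section~4, I would compute $g''(\pi)$ and show it is positive once $\beta$ exceeds a threshold $\beta^*$ (the positive root of a transcendental equation in $\beta m$ of the type appearing in the earlier proofs); since the hypothesis $\beta\geq e^2+e^{-1}-e+1\approx 6.03865$ comfortably exceeds this threshold, the second-derivative test would then pin the global minimum of $g$ at $\theta=\pi$. I expect this to be the main obstacle: $g''$ is an unwieldy expression, and one must confirm both that $\theta=\pi$ is the decisive critical point (so that the competing value at $\theta=0$ need not be smaller) and that $g''(\pi)>0$ uniformly over the admissible range $m\geq 1$.

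Finally I would evaluate $g$ at the minimizer. At $\theta=\pi$ one has $r=e^{-1}$, $r^2=e^{-2}$ and $s=-me^{-1}$, so $\psi$ becomes the negative real number $e^{-2}+e^{-1}-1-\beta m e^{-1}$; the arctangent term vanishes and the modulus term alone governs the estimate, giving
\[g(\pi)=\frac14\ln^2\left(\left(e^{-2}+e^{-1}-1-\beta m e^{-1}\right)^2\right)=\ln^2\left(1-e^{-1}-e^{-2}+\beta m e^{-1}\right).\]
A direct check shows that at the extremal data $m=1$ and $\beta=e^2+e^{-1}-e+1$ the inner quantity equals $e$, whence $g(\pi)=1$; being increasing in both $\beta$ and $m$, it stays $\geq e$, so $g(\theta)\geq g(\pi)\geq 1$ for every $\theta\in[0,2\pi)$ and every $m\geq 1$. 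Thus $|\log\psi(r,s,t;z)|\geq 1$, that is $\psi(r,s,t;z)\notin\Delta$, so $\psi\in\Psi\{e^z\}$, and Theorem~\ref{thm A}(ii) then yields $p(z)\prec e^z$, completing the proof.
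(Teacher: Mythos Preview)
Your proposal is correct and follows essentially the same route as the paper: the same choice $\psi(r,s,t;z)=r^2+r-1+\beta s$, the same decomposition $|\log\psi|^2=\tfrac14\ln^2(\mu^2+\nu^2)+(\tan^{-1})^2=:g(\theta)$, the same identification of $\theta=\pi$ as the minimizer via the second-derivative test, and the same evaluation $g(\pi)=\ln^2\bigl(\beta m e^{-1}-e^{-2}-e^{-1}+1\bigr)\geq 1$. The only difference is cosmetic: the paper asserts that $g$ attains its minimum at $\theta=\pi$ for every $\beta>0$ (no auxiliary threshold $\beta^*$ is needed here, unlike in several of the earlier theorems), so your anticipated step of locating a root $\beta^*$ turns out to be unnecessary in this particular case.
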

\begin{proof} Proceeding as in the previous theorems, we need to show that the function $\psi(r,s,t;z)  = r^2+r-1+\beta s$ satisfies the admissibility condition \eqref{adm}. Whenever $r = e^{e^{i\theta}}$, $s = m e^{i\theta}r$ and $\real  \left(1+t/s\right)$ $\geq$ $m (1+\cos \theta )$, where $z\in \mathbb{D}$, $\theta \in [0,2\pi)$ and $m \geq 1$,  $\psi(r,s,t;z)=e^{2e^{i\theta}}+e^{e^{i\theta}}-1+\beta m e^{i\theta}e^{e^{i\theta}}$ satisfies
\[|\log \psi(r,s,t;z)|^2= \frac{1}{4}\ln^2 (\mu^2  +\nu^2)+ \left(\tan^{-1}\frac{\nu}{\mu}\right)^2  =:g(\theta)\]
where
\[\mu=e^{2\cos\theta} \cos(2\sin\theta)+e^{\cos\theta} \cos(\sin\theta)+\beta m e^{\cos\theta} \cos \theta \cos(\sin\theta)-\beta m e^{\cos\theta} \sin\theta \sin(\sin\theta)-1\]and
\[\nu=e^{2\cos\theta} \sin(2\sin\theta)+e^{\cos\theta} \sin(\sin\theta)+\beta m e^{\cos\theta} \cos \theta \sin(\sin\theta)  +\beta m e^{\cos\theta} \sin\theta\cos(\sin\theta).\]

Using second derivative test, it can be easily verified that $g$ attains its minimum value at $\theta=\pi$ for $\beta>0$. Therefore for $\theta\in[0,2\pi)$, we have
\[g(\theta) \geq g(\pi) =\frac{1}{4}\ln^2\left(\frac{\beta m}{e}-\frac{1}{e^2}-\frac{1}{e}+1\right)^2.\]
Since logarithm is an increasing function and the condition $\beta \geq e^2+e^{-1}-e+1 $ imply that
\[|\log \psi(r,s,t;z)|^2 \geq \frac{1}{4}\ln^2\left(\frac{\beta }{e}-\frac{1}{e^2}-\frac{1}{e}+1\right)^2\geq \frac{1}{4}\ln^2(e^2)=1.\]
 Hence $\psi \in \Psi\{e^z\}$ and Theorem \ref{thm A} gives the desired result.
\end{proof}
\begin{remark}
As depicted in the previous section, results proved in this section also provide several sufficient conditions for a normalized analytic function to be in the class $\mathcal{S}^*_e$. These sufficient conditions can be obtained by simply putting $p(z)=zf'(z)/f(z)$, where $f\in\mathcal{H}$.
\end{remark}
\begin{remark}
Since we were concerned with the starlikeness property in this paper, therefore we presented applications of our results only for the subclasses of $\mathcal{S}^*$. However, by setting $p(z)=f'(z)$, $p(z)=f(z)/z$, $p(z)=2f(z)/z-1$, $p(z)=2\sqrt{f'(z)}-1$, $p(z)=2zf'(z)/f(z)-1$ and so forth in the theorems obtained, one can obtain many more differential implications.
\end{remark}
\bibliography{References}
\bibliographystyle{siam}
	
\end{document}